\newtheoremstyle{drem}% name
     {3pt}%      Space above
     {3pt}%      Space below
     {\rmfamily}%Body font
     {}%         Indent amount (empty = no indent, \parindent = para indent)
     {\bf }%Thm head font
     {.}%        Punctuation after thm head
     {.5em}%     Space after thm head: " " = normal interword space
     {}%         Thm head spec (can be left empty, meaning `normal')
\theoremstyle{drem}
\newtheorem{teo}{Theorem}[section]
\newtheorem{lem}[teo]{Lemma}
\newtheorem{cor}[teo]{Corollary}
\newtheorem{ques}[teo]{Question}
\newtheorem{defi}[teo]{Definition}
\newtheorem{rem}[teo]{Remark}
\newtheorem{exa}[teo]{Example}
\newcommand{\eg}[0]{\emph{e.g.} }
\newcommand{\ie}[0]{\emph{i.e.} }
\newcommand{\pgen}[1]{\langle #1 \rangle}
\newcommand{\inj}[0]{\hookrightarrow}
\newcommand{\maxx}[1]{\textrm{\raisebox{.5ex}{\mbox{$\underset{#1}{\max}$}}} \,}
\newcommand{\minn}[1]{\textrm{\raisebox{.5ex}{\mbox{$\underset{#1}{\min}$}}} \,}
\newcommand{\rr}[0]{\ensuremath{\mathbb{R}}}
\newcommand{\zz}[0]{\ensuremath{\mathbb{Z}}}
\newcommand{\nn}[0]{\ensuremath{\mathbb{N}}}
\newcommand{\jo}[1]{\ensuremath{\mathcal{#1}}}
\newcommand{\del}[0]{\ensuremath{\partial}}
\DeclareSymbolFont{bbold}{U}{bbold}{m}{n}
\DeclareSymbolFontAlphabet{\mathbbold}{bbold}
\newcommand{\xvc}[1]{\overrightarrow{#1}}
\newcommand{\wh}[1]{\widehat{#1}}
\title{On Critical Nets in $\rr^k$}
\author{Antoine Gournay\thanks{Supported by the ERC Consolidator Grant No. 681207, ``Groups, Dynamics, and Approximation''.}~ and Yashar Memarian}
\begin{document}

\maketitle

\begin{abstract}
Critical nets in $\rr^k$ (sometimes called geodesic nets) are embedded graph with the property that their embedding is a critical point of the total (edge) length functional and under the constraint that certain $1$-valent vertices have a fixed position. In contrast to what happens on generic manifolds, we show that the total length of the edges not incident with a $1$-valent vertex is bounded by $rn$ (where $r$ is the outer radius), the degree of any vertex is bounded by $n$ and that the number of edges (and hence the number of vertices) is bounded by $n\ell$ where $n$ is the number of $1$-valent vertices and $\ell$ is related to the combinatorial diameter of the graph. 
% in $\rr^2$, the number $n$ of $1$-valent vertices gives a bound on the number $N$ of vertices of valency $\geq 3$, as in the $2$-dimensional isoperimetric inequality: $N \leq \text{cst} \cdot n^2$.
\end{abstract}

\setcounter{teo}{0}
\renewcommand{\theteo}{\Alph{teo}}
\renewcommand{\theques}{\Alph{teo}}

\par A well-known problem in variational calculus is the Plateau problem: one looks for an $n$-dimensional object of minimal volume with prescribed boundary conditions. 
When $n=1$, this means one fixes a number of points and looks for a graph to join these points. Solutions are trees whose vertices have valency $3$ except for the boundary vertices (which are of valency $1$).
% \footnote{Of course, one may add as many vertices of valency $2$.}. 
Furthermore, the edges must be geodesic segments and the tangent vectors of these geodesics meet at angle $2\pi/3$. 
However, this functional admits much more interesting critical points (sometime called geodesic nets).
% An amusing question to ask after this is: what can one say about the critical points of this problem?

The initial motivation for this note stems from \cite{Grom}. Roughly put, in this work, Gromov explores the space of chains and cycles of a manifold $M$ via the volume functional. Morse theory tells us the topology of the level sets change near critical points. The homology of the space of cycles being \emph{a priori} ``complicated'', there should be ``complex'' level sets and thus critical points. Bounds on the ``complexity'' of these critical cycles should allow one to get insight on the space of cycles. 

In this context it is natural to ask what kind of chains are not only minimal, but critical points of the length (1-dimensional volume) functional. 
Let $\jo{G}_n$ be the family of countable graphs with $n$ vertices of valency $1$ and no vertex of valency $2$. 
Then, for $1$-chains, this leads to the question of finding, among submersions of graphs ($\in \jo{G}_n$) in a manifold, those whose image admit a given boundary and are of critical length. 
These graphs will be called critical nets and the subject matter of this note is to study their properties. 
The bounds for ``complexity'' will be to bound the length of the interior edges, the valency of the vertices and the number of interior vertices in terms of the number of boundary vertices.
The bound on the vertices will be better expressed in terms of the combinatorial diameter of the graph.

These nets were also studied (in relation to minimal surfaces) by Markvorsen in \cite{Mar} where they bear the name of ``minimal webs''. 
Another motivation for this question comes from the work of Almgren.~In \cite[\S 4-4]{Alm}, Almgren shows that some stationary $1$-varifold (called ``spider web-like'') have infinitely many vertices. 
In \cite{AA}, Allard and Almgren show that if the densities of stationary $1$-varifold form a discrete set, then it is locally finite (see \cite[Theorem.(5) on p.85]{AA}). 
Stationary $1$-varifolds with constant density correspond to the critical nets considered here. 

\newlength{\tmpcomp}
\setlength{\tmpcomp}{\textwidth}
\addtolength{\tmpcomp}{-5cm}
\noindent
\parbox{\tmpcomp}{
However, this finiteness result may not be used to get bounds on neither the valency, nor the number of vertices. And for a good reason: on the sphere, there is no way to bound these quantities. Indeed, choosing $n$ generic geodesic circles creates a critical net with $n(n-1)$ interior vertices and no leaves (vertices of valency $1$), \eg
}
\parbox{5cm}{
\textcolor{white}{a} 
\hfill 
\includegraphics[height=3.5cm]{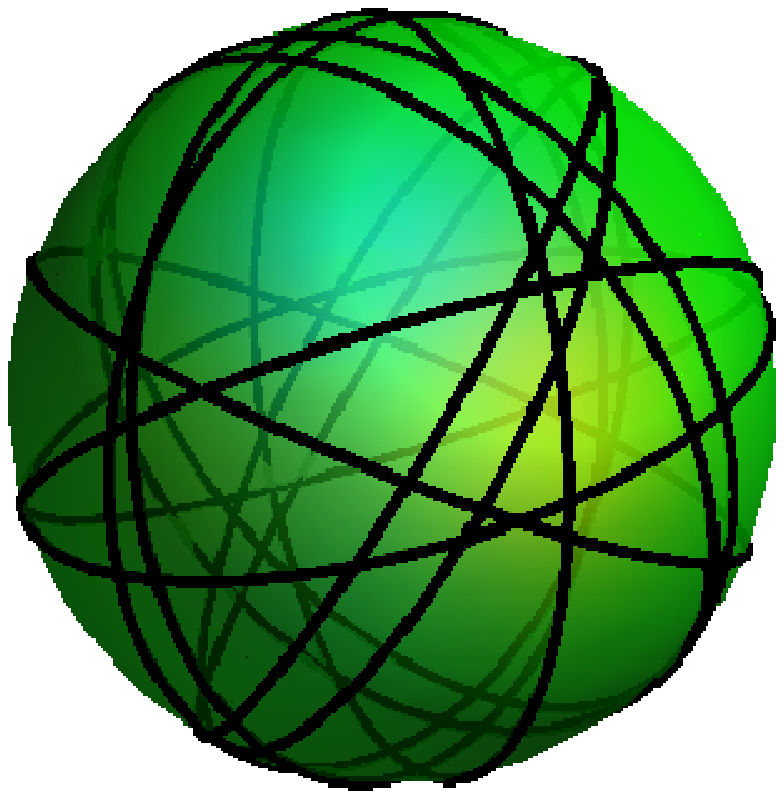}
\hfill 
\textcolor{white}{a} 
}

\noindent See the work of Hass and Morgan \cite{HM} for more on the possible constructions (there, the name is ``geodesic nets''). Similar constructions can be made on many other compact manifolds. 
%; this is, for example, the case as soon as there is a non-trivial intersection product on $1$-cycles. 
Theorem \ref{lethm} shows the situation is quite different when we leave the world of positive curvature and compact spaces.

It is not so clear what a critical net with uncountably many vertices should be, or if such objects would be of any interest. As a consequence, $X$ (the set of vertices) will always be assumed countable. Also, it will be assumed that the critical net is contained in a bounded set.

%  This said, even in the simplest manifold, the $n$-dimensional Euclidean space $\rr^n$, the situation has not been settled. 
In $\rr^k$, a critical net is described by the following data: there is a finite set of leaves, denoted $\del X$, of vertices whose position is given.
Henceforth, a vertex of valency $1$ will be called a leaf and the other vertices will be referred to as interior vertices. 

It is standard (see Almgren \& Allard \cite{AA} for a more general case) to check that conditions an embedded graph (which is a critical point of the length functional) will satisfy are that the edges must be geodesics.
% , and the sum of the tangent vector of all edges at a given interior vertex must be zero. 
Note it is still easy to see
\footnote{Assume the graph has a vertex of valency $>3$. Cut out a small ball around this vertex. Replace, inside this ball, the graph by a $3$-regular tree. 
Check that this operation reduces the length. 
Even if this operation creates a broken geodesic, it suffices to contradict minimality.}
that if one is interested in the minimal graphs, then the valency of any interior vertex is $3$. 
This particular case has been studied (for $\rr^2$) in \cite{SEXYashou}.

More recent works on the topic were made by Parsch (see \cite{P18} and \cite{P19}) as well as Nabutovsky and Parsch \cite{NP}. Note that the set-up there is slightly different (instead of leaves to attach the graph, they use vertex with possibly higher degree), but, as explained in of \cite{NP}, there is a correspondance between the two set-ups.

% The rest of the graph is realised in such a way that it is a critical point of the total length functional.
% For this to make sense, one needs at the very least that the individual edges have a finite length (this basically means they can be parametrised by Lipschitz functions). 
% In fact, it can be assumed that edges are [straight] segments 
% At each interior vertex $x$, the condition $\displaystyle \sum_{y \in N(x)} \dfrac{x-y}{\|x-y\|} =0$ is verified. 

The main result here are the two following theorems. $X_{int}$ will denote the set of interior vertices and $\del X$ the set of leaves ($|\del X| \geq 2$). For a subset $S$ of vertices  $\nu(S) = \sum_{x \in S} \nu(x)$ where $\nu(x)$ is the valency of $x$. By the ``total length'' of a critical net, one should understand that the edges connected to a leaf are removed (otherwise it could be arbitrary large).

\begin{teo}\label{lethmnew}
 Let $G$ be a critical net in $\rr^k$ with finitely many leaves and contained in a bounded domain. Then the valency of an (isolated) vertex is bounded by $|\del X|$, the total length $L$ is bounded by $r |\del X|$ (where $r$ is the outer radius, \ie the radius of the smallest ball containing all inner vertices).
\end{teo}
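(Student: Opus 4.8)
The plan is to derive both bounds from a single tool, the first variation of length of a balanced net, which for any $C^1$ vector field $Y$ defined near the net reads
\[
\int_{G}\mathrm{div}_{G} Y \;=\; -\sum_{p\in\del X} Y(p)\cdot\tau_p ,
\]
where $\tau_p$ is the unit vector at the leaf $p$ pointing into its unique edge and $\mathrm{div}_{G}Y$ is the divergence of $Y$ tangential to the (straight) edges. This identity is obtained by summing the elementary computation $\delta(\mathrm{length\ of\ }e)=u_e\cdot\bigl(Y(b_e)-Y(a_e)\bigr)$ over all edges and regrouping vertex by vertex: at every interior vertex the coefficient of $Y$ is the balancing vector $\sum_{e\ni v}\tau_{v,e}$, which vanishes by criticality, so only the leaves survive on the right-hand side.

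For the length bound I would apply this with the linear field $Y(y)=y-c$, where $c$ is the centre of the smallest ball containing the inner vertices. On each edge the tangential divergence of this field is identically $1$, so the left-hand side is exactly the total length of \emph{all} edges, interior and leaf edges alike. Writing each leaf edge as running from $p$ to its interior endpoint $q_p$ and peeling its length $|q_p-p|=(q_p-p)\cdot\tau_p$ off the left-hand side, the identity collapses to $L=-\sum_{p}(q_p-c)\cdot\tau_p$. Since $q_p$ is an inner vertex we have $|q_p-c|\le r$, and Cauchy--Schwarz gives $|(q_p-c)\cdot\tau_p|\le r$ term by term, whence $L\le r\,|\del X|$.

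For the valency bound at an isolated interior vertex $v$ I would apply the same identity with the \emph{unit radial} field $Y(y)=(y-v)/|y-v|$. Two facts drive the argument. First, a direct computation gives $\mathrm{div}_{G}Y=\sin^2\psi/\rho\ge 0$ on every edge (with $\rho=|y-v|$ and $\psi$ the angle between $y-v$ and the edge), so the left-hand side is nonnegative; this is precisely the monotonicity phenomenon. Second, because $v$ is isolated, a small ball about $v$ meets $G$ in $\nu=\nu(v)$ straight segments issuing radially from $v$, so excising $B(v,\eps)$ turns their $\nu$ intersections with the sphere into new leaves, each with inward direction $\tau_i$ and each contributing $Y\cdot\tau_i=1$. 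Applying the identity to $G\setminus B(v,\eps)$ and letting $\eps\to 0$ (the radial edges at $v$ carry zero divergence, so nothing is lost) yields $0\le \int_{G}\mathrm{div}_{G}Y=-\sum_{p}Y(p)\cdot\tau_p-\nu$, i.e. $\nu\le -\sum_{p}Y(p)\cdot\tau_p$. As $Y(p)$ and $\tau_p$ are unit vectors, each summand is at most $1$ in absolute value, so $\nu\le |\del X|$.

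The two divergence computations are routine. The main obstacle I anticipate is analytic rather than geometric: the net is only assumed to have countably many vertices inside a bounded set, so it may accumulate and carry infinitely many edges, and the first variation identity must be justified in this generality. The right-hand sides are harmless finite sums over the finitely many leaves, but the left-hand sides are a priori infinite integrals whose finiteness is part of the conclusion. I would handle this by first establishing the identities on finite subnets obtained by truncation and then passing to the limit, using the nonnegativity of $\mathrm{div}_{G}Y$ for the radial field to control that limit monotonically; the isolatedness hypothesis on $v$ is exactly what guarantees the clean local picture of $\nu$ radial segments required by the excision step.
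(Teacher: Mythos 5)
Your proposal is correct and is essentially the paper's own argument in varifold language: your first-variation identity is Lemma \ref{tcritnet-l}, your linear field $Y(y)=y-c$ is the ``scaling'' deformation of Lemma \ref{tbndlendef-l} and Corollary \ref{tbndlen-c}, and your unit radial field with the excision of a small ball around the isolated vertex is exactly the ``swelling'' deformation of Lemma \ref{tlemdeg-l}, including the key nonnegativity of $\mathrm{div}_G Y$ on the remaining edges. The analytic caveats you raise (truncation to compact subdomains, isolatedness ensuring the deformation is continuous at $v$) are the same ones the paper addresses.
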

The proof of Theorem \ref{lethmnew} is contained in \S{}\ref{sdef}. 
The bound on the degree remains true without the assumption on the isolated vertices, if one understands the degree as the length density near a vertex.
On the other hand, using length density, we also sketch an argument (not unlike that of Allard \& Almgren \cite{AA}) to show that the number of vertices is finite, see \S{}\ref{slenden}.

\begin{teo}\label{lethm}
Let $G$ be a critical net in $\rr^k$ with finitely many leaves and contained in a bounded domain. Then the total length $L$ satisfies
\[
L \leq \frac{ \big( \sum_{i=1}^k \ell_{v_i}^2  \big)^{1/2} }{2} |\del X|
\]
where $\ell_{v_i}$ are the lengths of the projections of the graph (with leaves removed) on orthogonal directions $\{v_i\}_{i=1}^k$. Furthermore, if $D_{v_i}$ is the length of the longest path by taking only edges which are positively colinear with $v_i$, then
\[
\nu(X_{int}) \leq 2 |\del X| + \big( \sum_{i=1}^k D_{v_i}^2  \big)^{1/2} |\del X|.
\]
% Moreover, if $k=2$, then $|X_{int}| \leq  6 |\del X|^2$.
\end{teo}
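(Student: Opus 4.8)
Since the edges are straight segments and the embedding is critical for length, the first variation gives the balancing condition $\sum_{e \ni x} u_{x,e} = 0$ at every interior vertex $x$, where $u_{x,e}$ is the unit vector at $x$ pointing along the incident edge $e$. The plan is to exploit this through a single master identity: for \emph{any} labelling $\phi : X_{int} \to \rr^k$ one has, tautologically,
\[
0 \;=\; \sum_{x \in X_{int}} \Big\langle \phi(x), \sum_{e \ni x} u_{x,e} \Big\rangle \;=\; \sum_{e \ \mathrm{core}} \langle \phi(x_e)-\phi(y_e),\, u_{x_e,e}\rangle \;+\; \sum_{e \ \mathrm{leaf}} \langle \phi(x_e),\, u_{x_e,e}\rangle ,
\]
where I call an edge \emph{core} if both endpoints $x_e,y_e$ lie in $X_{int}$ (with $u_{x_e,e}$ pointing from $x_e$ to $y_e$) and \emph{leaf} if it joins an interior vertex $x_e$ to a leaf; the regrouping uses that a core edge appears in the sums at both its endpoints (with opposite unit vectors) while a leaf edge appears only at its interior endpoint. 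The whole strategy is to feed two different labels $\phi$ into this identity.

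\textbf{First bound.}
Here I would take $\phi(x) = x - c$, the position centred so that $|\langle x-c, v_i\rangle| \le \ell_{v_i}/2$ for every interior vertex $x$ (choose $c$ to be the centre of the box that the interior vertices project into along the $v_i$). A core edge then contributes exactly $\langle x_e - y_e, u_{x_e,e}\rangle = -|e|$, so the identity collapses to
\[
L \;=\; \sum_{e \ \mathrm{core}} |e| \;=\; \sum_{e \ \mathrm{leaf}} \langle x_e - c,\, u_{x_e,e}\rangle \;\le\; \sum_{e \ \mathrm{leaf}} \| x_e - c\| \;\le\; \tfrac12\Big(\textstyle\sum_i \ell_{v_i}^2\Big)^{1/2} |\del X|,
\]
using Cauchy--Schwarz and $\| x_e - c\|^2 = \sum_i \langle x_e - c, v_i\rangle^2 \le \tfrac14 \sum_i \ell_{v_i}^2$, together with the fact that each of the $|\del X|$ leaves contributes one leaf edge. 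This is exactly the first inequality.

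\textbf{Second bound.}
For the edge count I would replace the geometric position by a \emph{combinatorial height}: orient each edge in the $+v_i$ direction, and let $h_i(x)$ be the number of edges in the longest $v_i$-monotone path ending at $x$, so $h_i$ takes values in $\{0,\dots,D_{v_i}\}$ and $|h_i(y)-h_i(x)| \ge 1$ with sign equal to that of $\langle u_{x,e},v_i\rangle$ whenever this projection is nonzero. Feeding the centred label $\phi = (h_i - D_{v_i}/2)_i$ into the master identity, each core edge now satisfies
\[
\langle \phi(x_e)-\phi(y_e),\, u_{x_e,e}\rangle \;=\; \sum_i (h_i(x_e)-h_i(y_e))\,\langle u_{x_e,e},v_i\rangle \;\le\; -\sum_i |\langle u_{x_e,e},v_i\rangle| \;\le\; -1 ,
\]
the last step being $\|u_{x_e,e}\|_1 \ge \|u_{x_e,e}\|_2 = 1$. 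Writing $E_{core}$ for the number of core edges and bounding the leaf terms by $\|\phi(x_e)\| \le \tfrac12(\sum_i D_{v_i}^2)^{1/2}$ gives $E_{core} \le \tfrac12(\sum_i D_{v_i}^2)^{1/2}|\del X|$; since $\nu(X_{int}) = 2E_{core} + |\del X|$, this yields $\nu(X_{int}) \le 2|\del X| + (\sum_i D_{v_i}^2)^{1/2}|\del X|$.

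\textbf{Where the difficulty lies.}
The criticality condition is used only to make the master identity vanish, so the scheme is robust; the real content of the second bound is isolating the correct surrogate for position, namely a label whose increments are forced to have absolute value at least one along every edge, which is precisely what the combinatorial heights $h_i$ provide. I expect the main points to watch are: checking that the sign of $h_i(y)-h_i(x)$ genuinely matches that of $\langle u_{x,e},v_i\rangle$ (so that the per-edge product is $\le -|\langle u_{x,e},v_i\rangle|$, with edges orthogonal to some $v_i$ harmlessly contributing $0$, so that no genericity of the frame is needed), and verifying that the factor $\tfrac12$ survives the centring step, which it does because centring alters each core-edge term by a difference that cancels.
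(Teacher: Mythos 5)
Your proof is correct, and it takes a genuinely different route from the paper's, at least for the length bound. The paper derives both inequalities from a network formalism: it defines the current $c_v(x,y)=\widehat{xy}\cdot v$, checks Kirchhoff's current law from the balancing condition, assigns resistances $r_e=\|\vec y-\vec x\|$ so that $p(y)=\vec y\cdot v$ is a potential, and proves the energy inequality $\sum_e r_e c_v(e)^2\le c_{in}(v)\,\ell_v$ (a rectangle-packing picture in $\rr^2$); summing over an orthonormal frame via $\sum_i c_{v_i}(e)^2=1$ and applying Cauchy--Schwarz twice then gives $L\le\tfrac12(\sum_i\ell_{v_i}^2)^{1/2}|\del X|$. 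Your first bound bypasses all of this: your master identity with $\phi(x)=\vec x-c$ is exactly the scaling identity of Lemma \ref{tbndlendef-l}, and you then observe that the interior vertices sit in a box of half-diagonal $\tfrac12(\sum_i\ell_{v_i}^2)^{1/2}$ --- in effect you show that the $r|\del X|$ bound of Theorem \ref{lethmnew} already implies the first inequality of Theorem \ref{lethm}. The paper explicitly leaves such a deformation-based proof as an exercise, so this is a legitimate and more elementary alternative; what the current/energy proof buys in exchange is the intermediate inequality $\sum_e r_ec_v(e)^2\le c_{in}(v)\ell_v$, which is the template for the combinatorial statement. For the second bound you use the same key object as the paper (the height $h_i(x)=$ length of the longest $v_i$-monotone path ending at $x$, whose increment is at least $1$ along every oriented edge), but you close the estimate pointwise at each leaf via $\|\phi(x_e)\|\le\tfrac12(\sum_iD_{v_i}^2)^{1/2}$ and per core edge via $\|u\|_{\ell^1}\ge\|u\|_{\ell^2}=1$, instead of passing through $c_{in}(v_i)$ and a global Cauchy--Schwarz. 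This packaging even removes the paper's genericity assumption that no edge be orthogonal to some $v_i$, and your (correct) count $\nu(X_{int})=2E_{core}+|\del X|$ actually yields the slightly stronger conclusion $\nu(X_{int})\le|\del X|+(\sum_iD_{v_i}^2)^{1/2}|\del X|$.

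One caveat you should make explicit: the vertex-by-vertex balancing identity, the rearrangement of the double sum into a sum over edges, and the existence of longest oriented paths all presuppose that $X_{int}$ is finite (or at least that every interior vertex is isolated and the relevant sums converge absolutely). The hypotheses only give finitely many leaves and boundedness; the paper closes this gap by invoking finiteness of the vertex set (Lemma \ref{tfinver-l}, via the Allard--Almgren result) as one of the three ingredients of its proof of Theorem \ref{lethm}. You should either cite that finiteness result or run your identity on restrictions $G_D$ with anchor vertices, as the paper does.
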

The proof of Theorem \ref{lethm} is contained in \S{}\ref{scurr}.

In a first approximation, $\tfrac{1}{2} ( \ell_{v_1}^2 + \ell_{v_2}^2  )^{1/2} $ may be thought of as the radius. The methods of the proof rely on looking at certain natural currents on the graph. In $\rr^2$, this can be geometrically seen as an isoperimetric property of packing of rectangles.

% The estimates on the number of edges and the valency in Theorem \ref{lethm} can certainly be improved. 
% In fact, Lemma \ref{tisop2-l} produces a bound for $\nu_{-2}(X_{int})$ where $\nu_{\ell}(Y) = \sum_{x \in Y} (\nu(x) +\ell)$. 
However, the estimates on the total length from Theorems \ref{lethmnew} and \ref{lethm} are rather sharp, as can be seen by looking at packings of regular hexagons, unit squares or unit (hyper)-cubes.
% This also hints at the fact that the highest number of interior vertices should be attained by $3$-regular graphs (here, as from now on, $\ell$-regular refers only to the valency of the interior vertices). 
The estimate on $\nu(X_{int})$ is probably not sharp (see Question \ref{laques} below).

\begin{ques}\label{laques}
Let $G$ be a critical net in $\rr^k$. 
\begin{enumerate}\renewcommand{\labelenumi}{ \upshape \textbf{(\roman{enumi})}} \setlength{\itemsep}{0pt} 
\item Is there a constant $c$ such that $\nu(X_{int}) \leq c |\del X|^2$?
% \item If $\nu_{-2}$ denotes the valency of a vertex minus 2, is there  
% and $\nu_{-2}(X) \leq \mathrm{cst}_{-2} |\del X |^2$? 
\item Fix the value of $|\del X|$ and look at all critical nets in $\rr^k$. If $G$ attains the maximal value of $|X_{int}|$ among all such graphs, is $G$ a packing of (not necessarily regular) hexagons, \ie $G$ is contained in a plane and $G$ is a $3$-regular graph?
\item Are there constants $K$ and $N$ so that $\displaystyle \big( \sum_{i=1}^k D_{v_i}^2  \big)^{1/2} \leq K |\del X|^N$?
\item Fix the value of $|\del X|$. Which $G$ attain the maximal value of $L/r$ where $r$ is the outer radius? Which $G$ attain the maximal value of $L/ \ell$, where $\ell = \big( \sum \ell_{v_i}^2 \big)^{1/2}$?
\hfill $\Diamond$
\end{enumerate}
\end{ques}
It would be nice to show that the value for $c$ in Question \ref{laques}{\bf .(i)} is $1/2$. Indeed, for a $3$-regular critical net in $\rr^2$ one would then obtain $|X_{int}| \leq \tfrac{1}{6} |\del X|^2$ and for a $4$-regular one, this yields $|X_{int}| \leq \tfrac{1}{8} |\del X|^2$. These inequalities coincide with the sharp inequalities done in \cite{SEXYashou} and would show that $3$-regular graphs are those with the highest number of interior vertices. 

Concering Question \ref{laques}{\bf .(iii)}, Example \ref{exadiam} shows that $N \geq 2$ and $K \geq \tfrac{1}{12}$. So even if a bound on the combinatorial diameter would be great (since it would yield an explicit bound on the number of vertices) it still cannot give a positive answer to Question \ref{laques}{\bf .(i)}

% Critical nets in $\rr^k$ for $k>2$ seem to satisfy even worse isoperimetry (for inner vertices), unless they lie in a plane. However, the authors do not know how to project a critical net to lower dimensions. In fact, 
It seems reasonable to believe that the ``isoperimetric'' inequality obtained above extends to other spaces of non-positive curvature. For such spaces, the ``isoperimetry'' of these graphs  seems indifferent on the actual isoperimetry of the ambient space. For example, by looking at the intersection of $n$ bi-infinite geodesics in the hyperbolic plane, one may construct a critical net with $ n(n-1)/2$ interior points and $2n$ leaves. Lastly, by zooming in a sufficiently small neighbourhood, one expects to see the same behaviour as in $\rr^k$.

{\it Acknowledgments:} A. Gournay would like to thank A. Georgakopoulos for pointing Example \ref{exage} to him.

%-----------------------------------------
\section{Criticality, deformations and the ensuing properties}
%-----------------------------------------

\setcounter{teo}{0}
\renewcommand{\theteo}{\thesection.\arabic{teo}}
\renewcommand{\thelem}{\thesection.\arabic{teo}}

% %-----------------------------------------
% \subsection{Notations}
% %-----------------------------------------

{\bf Notations.} 
For convenience, the set of edges will sometimes be seen as a (symmetric) subset of $X \times X$, $\hat{E}$, and somtimes as an unordered set of pairs $E$. 
In our set-up edges $(x,y)$ will be straight lines (geodesic) from $x$ to $y$ (in $\rr^k$ geodesics are unique so there is no ambiguity). 
As usual, the neighbours of $x$ are the set $N(x) = \{ y \in X \mid (x,y) \in \hat{E}\}$.

% For unoriented edges we will use the notation $\{x,y \}$ while $(x,y)$ will emphasize an oriented edge. 
The {\bf leaf vector} refers to the unit vector pointing at a leaf of the graph. 

Sometimes vertices of degree two will be artificially introduced. 
In the context of deformations, they will be called ``anchor vertices''. 
In other context these will be called ``fake vertices''.

In order to be able to do anything on these graphs, it is necessary that the leaves are isolated from the other vertices.

Lastly, note that one can always create a new critical net out of an old one. 
Indeed, if $D \subset \rr^k$ be a (compact, smooth boundary) domain  
so that $\del D$ intersects $G$ only on its edges. 
The {\bfseries restriction} of $G$ to $D$, denoted $G_D$ is the graph constructed as follows: its vertices are the vertices of $G$ inside $D$ and its leaves are the anchors (intersection of the edges with $\del D$).
% \begin{lem}
% If $G$ is a critical net then $G_D$ is a critical net.
% \end{lem}
% \begin{proof}
% % XXX
% For vertices in the interior of $D$ it follows from the fact that $G$ is critical. Vertices on $\del D$ are leaves, so no condition is required.
% \end{proof}

Note that, although we usually start with graph $G$ with finitely many leaves, it is possible for $G_D$ to have an infinite number of leaves.

%-----------------------------------------
\subsection{Deformations and defining criticality}
%-----------------------------------------

It is worthwhile to ponder a bit about the definition of critical nets. Sometimes, the definition is given as the necessary condition that at every vertex, the outgoing unit vectors of the edges add up to 0 (see Lemma \ref{tcondver-l}).
Though necessary this condition is not sufficient.
To this effect, A. Georgakopoulos pointed out the following example to the first author.

\begin{exa}\label{exage}

Consider an infinite 4-regular tree where the edges all meet at vertices with 90\textdegree (so all vertices look like a +).
There are bounded embeddings of this tree with finite total length.
Now cut some edge in two parts, remove one half of the tree and declare the point where you cut the tree a leaf. Here is what it could look like (only the first few levels of the tree are drawn)
\begin{center} 
% \begin{tikzpicture}
% \fill (-1,0) circle (.25ex);
% \draw (-1,0) -- (1,0);
% \draw (0,-1) -- (0,1);
% 
% \draw (-1/3,1/2) -- (1/3,1/2);
% \draw (-1/3,-1/2) -- (1/3,-1/2);
% \draw (1/2,-1/3) -- (1/2,1/3);
% 
% \draw (-1/6,3/4) -- (1/6,3/4);
% \draw (-1/6,-3/4) -- (1/6,-3/4);
% \draw (1/2-1/6,1/6) -- (1/2+1/6,1/6);
% \draw (1/2-1/6,-1/6) -- (1/2+1/6,-1/6);
% 
% \draw (1/2+1/4,-1/6) -- (3/4,+1/6);
% \draw (1/6,1/2-1/6) -- (1/6,1/2+1/6);
% \draw (-1/6,1/2-1/6) -- (-1/6,1/2+1/6);
% \draw (1/6,-1/2-1/6) -- (1/6,-1/2+1/6);
% \draw (-1/6,-1/2-1/6) -- (-1/6,-1/2+1/6);
% \end{tikzpicture}
\begin{tikzpicture}
\fill (-1,0) circle (.35ex);

\draw (-1,0) -- (0,0);

\draw (0,0) -- (.5,0);
\draw (0,0) -- (0,.5);
\draw (0,0) -- (0,-.5);

\draw (.5,0) -- (.5+.25,0);
\draw (.5,0) -- (.5,.25);
\draw (.5,0) -- (.5,-.25);

\draw (0,.5) -- (.25,.5);
\draw (0,.5) -- (-.25,.5);
\draw (0,.5) -- (0,.25+.5);

\draw (0,-.5) -- (.25,-.5);
\draw (0,-.5) -- (-.25,-.5);
\draw (0,-.5) -- (0,-.25-.5);

\newcounter{i}
\setcounter{i}{0}
\newcounter{j}
\setcounter{j}{0}
\foreach \point in {(.5+.25,0),(.5,.25),(.5,-.25),(.25,.5),(-.25,.5),(0,.25+.5),(.25,-.5),(-.25,-.5),(0,-.25-.5)}{
    \node[coordinate] (point-\arabic{i}) at \point { };
    \draw (point-\arabic{i}) -- ($(point-\arabic{i}) + (0,.125)$);
    \draw (point-\arabic{i}) -- ($(point-\arabic{i}) + (0,-.125)$);
    \draw (point-\arabic{i}) -- ($(point-\arabic{i}) + (.125,0)$);
    \draw (point-\arabic{i}) -- ($(point-\arabic{i}) + (-.125,0)$);
    \foreach \pointd in {(0,-1),(0,1),(-1,0),(1,0)}{
        \node[coordinate] (pointd-\arabic{j}) at \pointd { };
        \draw ($(point-\arabic{i}) + (0,.125)$) -- ($(point-\arabic{i}) + (0,.125) + .0625*(pointd-\arabic{j})$);  
        \draw ($(point-\arabic{i}) + (0,-.125)$) -- ($(point-\arabic{i}) + (0,-.125) + .0625*(pointd-\arabic{j})$);
        \draw ($(point-\arabic{i}) + (.125,0)$) -- ($(point-\arabic{i}) + (.125,0) + .0625*(pointd-\arabic{j})$);
        \draw ($(point-\arabic{i}) + (-.125,0)$) -- ($(point-\arabic{i}) + (-.125,0) + .0625*(pointd-\arabic{j})$);
        \stepcounter{j}
    }
    \stepcounter{i}
}

% \newcounter{i}
% \setcounter{i}{0}
% \newcounter{j}
% \setcounter{j}{0}
% \newcounter{k}
% \setcounter{k}{0}
% \foreach \point in {(0,-1),(0,1),(1,0)}{
%     \node[coordinate] (point-\arabic{i}) at \point { };
%     \draw (0,0) -- ($(0,0) + .5*(point-\arabic{i})$);
%     \setcounter{j}{0}
%     \foreach \pointd in {(0,-1),(0,1),(-1,0),(1,0)}{
%         \node[coordinate] (pointd-\arabic{j}) at \pointd { };
%         \draw ($(0,0) + .5*(point-\arabic{i})$) -- ($(0,0) + .5*(point-\arabic{i}) + .25*(pointd-\arabic{j})$);
%         \setcounter{k}{0}
%             \foreach \pointt in {(0,-1),(0,1),(-1,0),(1,0)}{
%                 \node[coordinate] (pointt-\arabic{k}) at \pointt { };
% %                 \draw ($(0,0) + .5*(point-\arabic{i}) + .25*(pointd-\arabic{j})$) -- ($(0,0) + .5*(point-\arabic{i}) + .25*(pointd-\arabic{j}) + .125*(pointt-\arabic{k}$);
%                 \stepcounter{k}
%             }
%         \stepcounter{j}
%     }
%     \stepcounter{i}
% }

% \draw (0,0) -- (0,.5);
% \draw (0,0) -- (0,-.5);
% \draw (0,0) -- (.5,0);

% \draw (0,.5) -- ($(0,.5) + ;
% \draw (0,-.5);
% \draw (.5,0);
\end{tikzpicture}

\end{center}
This graph satisfies the condition of Lemma \ref{tcondver-l}, however it is not a critical net: you could move all vertices in such a way as to reduce the total length.
\end{exa}

Note that the condition that the crossing are at 90\textdegree{} is arbitrary and one could vary the angle at every crossing. One could also vary the length of the edges. If the graph is bounded the accumulation set of the vertices is going to be uncountable.

That said, there are essentially two ways of defining critical nets:

\begin{enumerate}
\item the graphs are countable and locally finite (meaning every vertex has finite degree, but the degree may not be bounded). This graph may be given the structure of a 1 dimensional pseudo-manifold (or look at it as a CW-complex). Look at embeddings $f:G \to \rr^k$ (inside a compact domain) and define the length of this embedding (possibly infinite). The embedding is critical if for any one parameter family of embedding which contain this embedding, the derivative of the length with respect to this parameter is 0. (Even if the length is infinite, one can study the variation of the length by restricting to parts whose length is finite.)
\item consider the (countable) graph as embedded in Euclidean space. 
It is critical if the variation of the length for any one-parameter family of diffeomorpisms (or even some particularly nice families of homeomorphisms) of Euclidean space is trivial. (Again, even if the total length of the edges is infinite, one may restrict to a part of the graph.)
\end{enumerate}

The second point of view is more restrictive: if one wants to use a deformation which moves a certain vertex $x$, then one might need to move vertices which are accumulating towards $x$. 
The second point of view is coherent with Allard \& Almgren \cite{AA};
in particular, note that two edges crossing have the same effect as one vertex of degree 4.
In the first point of view, there could be an embedding where two edges cross, and another one where they do not. 
The first seems more natural when one thinks of realisations of some homology class which are critical for the length functional.

Here is a more formal description of the second point of view. 
First, it is not too hard to see that for critical net in $\rr^k$, edges are given by straight segments. 
Otherwise, one could deform this edge in such a way that (even to the first-order) the variation of the lenght is non-zero.
This said the graph is then defined by knowing which vertices are neighbours and where they are. 

% Thus the total length is given by the position of the vertices and the adjacency relation.
Given a vertex $x \in X$, the position in $\rr^k$ of this vertex will be denoted by $\vec{x}$.
The total length of a graph is given by:
\[
L = \sum_{ \{x,y\} \in E} \| \vec{x} - \vec{y}\|.
\]
Note that this quantity is \emph{a priori} not bounded.
% So in order to treat both points of view, it will be assumed that edges do not cross (\emph{i.e.} if they do, then put a vertex there).

Let $\phi_t$ be a family of Lipschitz homeomorphisms which is $\mathsf{C}^1$ with respect to $t$, so that $\phi_0$ is the identity and which is (for all $t$) the identity on all leaves of the graph. Using that $\|\vec{a} \| = \sqrt{ \vec{a} \cdot \vec{a}}$, The variation of the total length is
\[
\frac{\mathsf{d}}{\mathsf{d}t} \Big|_{t =0} \sum_{ \{x,y\} \in E} \| \phi_t(\vec{x}) - \phi_t(\vec{y})\|
= 
\sum_{ \{x,y\} \in E} \frac{\vec{x} - \vec{y}}{\| \vec{x} - \vec{y}\|} \cdot \big( \phi_0'(\vec{x}) - \phi'_0(\vec{y}) \big)
\]
(There are of course a certain number of conditions to clarify in order to make the above derivation rigourous.)

Note that $\phi'_0$ is the only important component of the right-hand side, and it can be chosen to be fairly arbitrary.
In order to keep things rigourous, 
we will restrict to the case where $\phi'_0$ is supported on the neighbourhood of a (compact with smooth boundary) domain $D$. 
Also it will be assumed that the boundary of the domain $D$ intersects only edges and that no vertex may accumulate to the boundary will be considered here.

Let us introduce some terminology in order to reduce the one-parameter family of deformations to a simpler setting.
At the intersection of every edge and $\del D$, one puts a ``{\bfseries anchor}'' vertex, a vertex of degree 2 which is not moved by the deformation.
An edge between an anchor vertex and a vertex inside $D$ will be called an 
{\bfseries anchor edge}.

If one includes these anchor vertices, then the deformation is completely determined by its effect on the vertices (the edges remain straight lines). 
A deformation is thus given by a function $X \to \rr^k$ which take value $\vec{0}$ on the anchors (since they are fixed) and is continuous (with respect to the topology on $X$ as a subset of $\rr^k$). 
This function $d$ says in which direction to perturb the vertices of $X$ and could be supported on a smaller set than $X_{int}$.

All this said the following lemma/definition summarises the only important property of critical nets that we will use.
\begin{defi}
 A graph is a {\bf critical net} if, for any continuous function $\vec{d}: X \to \rr^k$ whose support is a (compact for the topology of $\rr^k$) subset of $X_{int}$,
\[
\sum_{ \{x,y\} \in E} \frac{\vec{x} - \vec{y}}{\| \vec{x} - \vec{y}\|} \cdot \big( d(\vec{x}) - d(\vec{y}) \big) =0.
\]
where the sum runs over all unordered pairs $\{x,y\}$ so that at least one of $x$ and $y$ belongs to the support of $\vec{d}$. If only one of $\{x, y\}$ is in the sum, then the other one is an anchor vertex.
\end{defi}

Upon 
% evaluating the derivative above (recall that $\|\vec{a} \| = \sqrt{ \vec{a} \cdot \vec{a}}$) and 
putting the anchor vertices on the other side, one gets the following:
\begin{lem}\label{tcritnet-l}
Assume $G \inj \rr^k$ is a critical net. 
For any $D \subset \rr^k$, for any continuous function $\vec{d}: X \to \rr^k$ whose support is in $D$,
\[
%  \frac{\mathrm{d}}{\mathrm{d}t} \bigg|_{t=0}
% \sum \| \big(\vec{x} + t \vec{d}(x) \big) - \big(\vec{y} + t \vec{d}(y) \big)\| 
% =  \frac{\mathrm{d}}{\mathrm{d}t} \bigg|_{t=0}
% \sum \| \vec{x} - \vec{y} + t \big(\vec{d}(x) - \vec{d}(y) \big)\| 
% =  
\sum_{\{x,y\} \subset D} \frac{\vec{x} - \vec{y} }{\| \vec{x} - \vec{y} \|} \cdot \big(\vec{d}(x) - \vec{d}(y) \big) 
= 
\sum_{ a \in A} \frac{\vec{a} - \vec{x}_a }{\| \vec{a} - \vec{x}_a \|} \cdot  \vec{d}( x_a )
\]
where the sum on the left runs over all unordered edges $\{x,y\}$ that lie in $D$ and the sum on the right runs over all the anchor vertices $A$ and $x_a$ is the vertex inside $D$ attached to the anchor $a$.
\end{lem}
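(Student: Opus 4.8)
The plan is to read the asserted identity as a pure bookkeeping reorganisation of the defining equation for a critical net, splitting the single sum over \emph{all} edges that touch the support of $\vec{d}$ into the part interior to $D$ and the part that reaches the boundary $\del D$. The algebra is essentially forced once the anchors are in place, exactly as the phrase ``putting the anchor vertices on the other side'' suggests.

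Concretely, I would first apply the definition of critical net verbatim to the given field $\vec{d}$, whose support consists of interior vertices lying in $D$, obtaining
\[
\sum_{\{x,y\}} \frac{\vec{x}-\vec{y}}{\|\vec{x}-\vec{y}\|}\cdot\big(\vec{d}(x)-\vec{d}(y)\big)=0 ,
\]
the sum running over every unordered pair $\{x,y\}$ at least one of whose endpoints lies in the support of $\vec{d}$. I would then partition these pairs according to whether both endpoints lie in $D$ or only one does. The pairs of the first kind are precisely the interior edges $\{x,y\}\subset D$, and they assemble into the left-hand side of the claim. A pair of the second kind has one endpoint $x_a$ inside $D$ and one anchor endpoint $a$ on $\del D$; since anchors are fixed, $\vec{d}(a)=\vec{0}$, so the corresponding summand collapses to $\tfrac{\vec{x}_a-\vec{a}}{\|\vec{x}_a-\vec{a}\|}\cdot\vec{d}(x_a)$. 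Transposing this boundary contribution and using $\tfrac{\vec{a}-\vec{x}_a}{\|\vec{a}-\vec{x}_a\|}=-\tfrac{\vec{x}_a-\vec{a}}{\|\vec{x}_a-\vec{a}\|}$ yields exactly the right-hand side, which is the content of the lemma.

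The point to be careful about — and what I expect to be the only real obstacle — is legitimacy rather than computation: one must check that a field $\vec{d}$ prescribed merely to be supported in $D$ is an admissible test field for the definition. This is where the anchor construction does its work. Placing a fixed degree-two vertex at each crossing of an edge with $\del D$ turns $G_D$ into a critical net whose leaves include these anchors, so that extending $\vec{d}$ by $\vec{0}$ outside $D$ produces a continuous field supported in $X_{int}$ to which the definition applies directly. I would also record the standing hypotheses that keep the two rearranged sums meaningful, namely that the support of $\vec{d}$ is compact, that $\del D$ meets $G$ only along edges, and that no vertex accumulates on $\del D$; these ensure that the boundary family $A$ and the interior edge sum behave well even when $D$ contains infinitely many vertices, so that the partition and the transposition of terms are justified term by term.
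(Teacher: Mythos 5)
Your proposal is correct and follows exactly the paper's (very terse) argument: apply the defining identity to $\vec{d}$, split the sum into edges contained in $D$ and anchor edges, use $\vec{d}(a)=\vec{0}$ at anchors, and transpose the boundary terms with the sign flip $\frac{\vec{a}-\vec{x}_a}{\|\vec{a}-\vec{x}_a\|}=-\frac{\vec{x}_a-\vec{a}}{\|\vec{x}_a-\vec{a}\|}$. Your added remarks on the admissibility of the test field and the standing hypotheses on $\del D$ are consistent with the paper's set-up and only make the bookkeeping more explicit.
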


Note that we do not require the total length to be finite.

%-----------------------------------------
\subsection{Basic properties, scaling and dilating }
%-----------------------------------------

This subsection just proves some basic properties and can be seen as some introductory examples for the application of Lemma \ref{tcritnet-l}. Note that the three lemmas of this subsection are already present in Parsch \cite{P18}, but the proofs follow a different method. 

\begin{lem}\label{tcondver-l}
In a critical net, for any [isolated] vertex $x$ (which is not a leaf) the following holds:
\[
 \sum_{y \in N(x)} \frac{\vec{x} - \vec{y} }{ \| \vec{x} - \vec{y} \|} =0
\]
\ie the sum of the unit vectors corresponding to the edges incident at $x$ sum up to 0.
\end{lem}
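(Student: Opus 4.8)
The plan is to derive this vertex condition directly from the criticality definition (equivalently, Lemma \ref{tcritnet-l}) by making an especially simple choice of deformation: one that perturbs only the single vertex $x$ and leaves every other vertex fixed. First I would fix an isolated interior vertex $x$ and choose an arbitrary direction $\vec{v} \in \rr^k$. Because $x$ is isolated, there is a small ball around $\vec{x}$ containing no other vertex of $X$; this is exactly where isolation is needed, so that a deformation supported near $x$ genuinely moves $x$ alone and does not drag along an accumulating sequence of neighbours. Concretely, I would take a continuous function $\vec{d}: X \to \rr^k$ with compact support in $X_{int}$ such that $\vec{d}(x) = \vec{v}$ and $\vec{d}(y) = \vec{0}$ for all other vertices $y$ (a bump supported in that small ball accomplishes this).

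Next I would substitute this $\vec{d}$ into the criticality identity. In the sum over unordered pairs $\{x,y\}$ with at least one endpoint in the support of $\vec{d}$, only the edges incident to $x$ survive, since $\vec{d}$ vanishes on all other vertices. For each neighbour $y \in N(x)$, the term $d(\vec{x}) - d(\vec{y}) = \vec{v} - \vec{0} = \vec{v}$, so the criticality equation collapses to
\[
\sum_{y \in N(x)} \frac{\vec{x} - \vec{y}}{\| \vec{x} - \vec{y}\|} \cdot \vec{v} = 0.
\]
This holds for the chosen $\vec{v}$; since $\vec{v}$ was arbitrary, I would then let $\vec{v}$ range over a basis of $\rr^k$ (or simply observe that a fixed vector orthogonal to all $\vec{v}$ forces the bracketed vector to vanish). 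Because the dot product of the fixed vector $\sum_{y \in N(x)} \tfrac{\vec{x} - \vec{y}}{\| \vec{x} - \vec{y}\|}$ with every $\vec{v}$ is zero, that vector must itself be $\vec{0}$, which is precisely the claim.

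The only genuinely delicate point is ensuring that the orientation of the edge vector in the summand matches the convention, and that the pairing with $\vec{v}$ and the subsequent ``$\vec{v}$ arbitrary $\Rightarrow$ vector is zero'' step is valid: here one must verify that $N(x)$ is finite (local finiteness, part of the standing assumptions) so the sum is a genuine finite vector, and that isolation of $x$ legitimately permits a deformation moving $x$ alone. The main obstacle is therefore not the computation but the careful bookkeeping of which edges enter the sum and the justification that the admissible deformation exists; once that is in place, the conclusion is immediate from linearity of the inner product.
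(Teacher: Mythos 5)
Your proposal is correct and follows essentially the same route as the paper: a deformation supported only at the isolated vertex $x$, substituted into the criticality identity, with the arbitrariness of $\vec{d}(x)=\vec{v}$ forcing the sum of unit edge vectors to vanish. Your added remarks on why isolation is needed (so a continuous bump can move $x$ alone) and on finiteness of $N(x)$ only make explicit what the paper leaves implicit.
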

\begin{proof}
Using a function $\vec{d}$ which is supported exactly at the vertex $x$ one gets \linebreak $\displaystyle \sum_{y \in N(x)} \frac{\vec{x} - \vec{y} }{ \| \vec{x} - \vec{y} \|} \cdot \vec{d}(x) =0$. Since the value $\vec{d}(x)$ can be any vector of $\rr^k$, the claim follows.
\end{proof}

{\bfseries Scaling.} A particular deformation which will be useful is scaling the whole graph (except the leaves). 
For convenience it is better to let the centre of the scaling be the origin (so that $d(x) = \vec{x}$) and translate the whole graph so that the centre lies where one needs it.

\begin{lem}\label{ttwover-l}
If $G \inj \rr^k$ is a critical net, then $G$ has at least two leaves.
\end{lem}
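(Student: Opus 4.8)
The plan is to argue by contradiction using the scaling deformation described just above, with the centre of scaling placed at a leaf. Suppose $G$ is a (nonempty) critical net with at most one leaf. I will exhibit an admissible deformation whose length variation is \emph{strictly positive}, contradicting the defining equation of a critical net.

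First I choose the centre $\vec{c}$ of the scaling. If $G$ has no leaf, let $\vec{c}$ be arbitrary (say the origin); if $G$ has exactly one leaf $p$, set $\vec{c}=\vec{p}$. Define $\vec{d}(x)=\vec{x}-\vec{c}$. In the one-leaf case $\vec{d}(p)=\vec{0}$, so $\vec d$ fixes the unique leaf, as a deformation must; in the leafless case there is nothing to fix. Either way $\vec d$ is supported on the interior vertices. The key computation is that for every edge $\{x,y\}$ occurring in the criticality sum one has $\vec{d}(x)-\vec{d}(y)=(\vec{x}-\vec{c})-(\vec{y}-\vec{c})=\vec{x}-\vec{y}$, so each summand becomes
\[
\frac{\vec{x}-\vec{y}}{\|\vec{x}-\vec{y}\|}\cdot\big(\vec{d}(x)-\vec{d}(y)\big)=\frac{\vec{x}-\vec{y}}{\|\vec{x}-\vec{y}\|}\cdot(\vec{x}-\vec{y})=\|\vec{x}-\vec{y}\|>0 .
\]
Note that the edge joining the unique leaf $p$ to its neighbour $x_0$ is included in the sum (its interior endpoint $x_0$ lies in the support of $\vec d$) and contributes $\|\vec{x}_0-\vec{p}\|>0$ as well, since $\vec d(p)=\vec 0$. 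Because a nonempty critical net contains at least one edge (interior vertices have valency $\geq 3$), the whole sum is strictly positive, contradicting the criticality condition that it vanish. Hence $G$ has at least two leaves, and the fact that the bound is exactly two is explained by observing that scaling can fix at most one prescribed centre.

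The step requiring the most care, and the main obstacle, is verifying that $\vec d$ is a legitimate deformation, i.e.\ that its support is a compact subset of $X_{int}$; continuity is immediate as $\vec d$ is the restriction of an affine map. For a net with finitely many vertices there is nothing to check. For an infinite net the closed support of $\vec d$ contains accumulation points of $X_{int}$ that need not themselves be vertices, so one cannot apply the definition directly. The clean fix is to pass through Lemma \ref{tcritnet-l}: take a large closed ball $D$ centred at $\vec c$ containing $\overline{X}$, so that $\partial D$ meets no edge and no anchors arise, and run the same computation after multiplying $\vec d$ by a continuous cutoff equal to $1$ on a large finite subgraph and vanishing near the accumulation set; letting the cutoff exhaust $X$ shows the positive sum above cannot be cancelled, yielding the same contradiction.
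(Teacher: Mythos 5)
Your proof is correct and is essentially the paper's argument: both use the scaling deformation $\vec{d}(x)=\vec{x}-\vec{c}$ with the centre chosen so that the single boundary contribution vanishes (you place $\vec{c}$ at the leaf, while the paper restricts to a domain $D$ excluding a small neighbourhood of the leaf and places the centre at the anchor's neighbour $x_a$), and both conclude from the criticality identity that the sum of edge lengths must be zero, i.e.\ the graph is trivial. The differences (direct use of the definition versus Lemma \ref{tcritnet-l}, and your extra care about the support of $\vec{d}$) are cosmetic.
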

\begin{proof}
Assume $G$ has only one leaf and pick $D$ to contain the whole graph but for a small neighbourhood of this leaf. 
There is only one anchor $a$;  let $x_a$ be its neighbour.
Pick $x_a$ as the centre of the scaling.
Then Lemma \ref{tcritnet-l} reads: $\sum \|\vec{x} - \vec{y} \| =0$.
So the graph is trivial.
\end{proof}

{\bfseries Dilating.} A similar deformation is a dilation. 
Here we will use it to mean that only the component toward some vector $\vec{e}$ will be scaled.
Again, for convenience it is better to let the centre of this transformation to be the origin (so that $d(x) = (\vec{e} \cdot \vec{x}) \vec{e}$ where $\vec{e}$ is some vector) and translate the whole graph so that the centre lies where one needs it.

\begin{lem}\label{tthreever-l}
If $G \inj \rr^k$ is a critical net with some interior vertex, then $G$ has at least three leaves.
\end{lem}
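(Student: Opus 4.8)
The plan is to argue by contradiction: suppose $G$ is a critical net possessing an interior vertex but having at most two leaves. By Lemma \ref{ttwover-l} it then has exactly two leaves. The first step is a reduction to the connected case. Restricting $G$ to one of its connected components yields again a critical net, and the scaling argument of Lemma \ref{ttwover-l} shows that any component carrying no leaf has total interior length $0$, hence is trivial; so each nontrivial component carries at least two leaves, and with only two leaves available $G$ is connected (away from trivial pieces). Since leaves have valency $1$, removing them leaves the set $X_{int}$ of interior vertices connected through interior--interior edges.

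Next I would set up the deformation data as in Lemma \ref{tcritnet-l}. Choose a domain $D$ containing all of $X_{int}$ but excising a small neighbourhood of each leaf, producing two anchors $a_1,a_2$ with interior neighbours $x_{a_1},x_{a_2}$ and leaf vectors $\vec u_i = \frac{\vec a_i - \vec x_{a_i}}{\|\vec a_i - \vec x_{a_i}\|}$. The heart of the argument is a dilation centred at $x_{a_1}$: apply Lemma \ref{tcritnet-l} with $\vec d(x) = (\vec e \cdot (\vec x - \vec x_{a_1}))\,\vec e$. The left-hand side becomes the sum of squares $\sum \frac{(\vec e \cdot (\vec x - \vec y))^2}{\|\vec x - \vec y\|} \geq 0$ over interior edges, while on the right the $a_1$-term vanishes (its displacement is $\vec 0$) and only the term $(\vec u_2 \cdot \vec e)\,\big(\vec e \cdot (\vec x_{a_2} - \vec x_{a_1})\big)$ survives.

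Now I would exploit the freedom in $\vec e$. Choosing $\vec e \perp \vec u_2$ kills the right-hand side, so the sum of squares vanishes and every interior edge is orthogonal to $\vec e$; letting $\vec e$ range over the whole hyperplane $\vec u_2^{\perp}$ forces every interior edge to be parallel to $\vec u_2$, and repeating with centre $x_{a_2}$ gives parallelism to $\vec u_1$. Since $X_{int}$ is connected it therefore lies on a single line, on which an embedded graph has all valencies $\leq 2$; feeding this into Lemma \ref{tcondver-l} (the incident unit vectors at an interior vertex sum to $\vec 0$) rules out the remaining degree-$3$ possibility at $x_{a_1},x_{a_2}$, contradicting the existence of an interior vertex.

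The step I expect to be most delicate is precisely this endgame — passing from ``all interior edges are parallel'' to ``no vertex has valency $\geq 3$''. To sidestep the case analysis I would instead run the scaling deformation $\vec d(x) = \vec x - \vec x_{a_1}$, for which Lemma \ref{tcritnet-l} reads $L_{int} = \vec u_2 \cdot (\vec x_{a_2} - \vec x_{a_1}) \leq \|\vec x_{a_2} - \vec x_{a_1}\|$, where $L_{int}$ is the total interior length, whereas connectivity forces $L_{int} \geq \|\vec x_{a_1} - \vec x_{a_2}\|$ (a path of straight edges is at least as long as the Euclidean distance between its endpoints). Equality throughout pins the interior graph to the straight segment $[x_{a_1},x_{a_2}]$, all of whose internal vertices have valency $2$, so there is no interior vertex — the desired contradiction. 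The one point requiring genuine care here is the rigidity claim, namely that equality in the length--distance comparison leaves no room for an extra edge or a bend.
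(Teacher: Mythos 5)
Your main argument is correct and essentially the paper's own proof: a dilation centred at $x_{a_1}$ whose boundary term is annihilated by the choice of $\vec e$ (the paper takes $\vec e \perp (\vec x_{a_2}-\vec x_{a_1})$ rather than $\vec e \perp \vec u_2$, but either factor of the single surviving anchor term vanishes), forcing every edge to be parallel to one fixed direction and hence every valency to be at most $2$. Your alternative endgame via scaling and the equality case of the triangle inequality is also valid and is not in the paper, but note that it genuinely requires the connectivity reduction, whereas the dilation route concludes without it.
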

\begin{proof}
The previous lemma already show it cannot have one leaf. 
Assume $G$ has only two leaves and pick $D$ to contain the whole graph but for a small neighbourhood of these leaves. 
There are two anchors $a$ and $b$; let $x_a$ and $x_b$ be their neighbour.
Pick $x_a$ as the centre of the scaling and let $\vec{e}$ be any vector perpendicular to $\vec{x}_b - \vec{x}_a$ ($= \vec{x}_b$ since we translated the graph so that $\vec{x}_a =\vec{0}$).

Then Lemma \ref{tcritnet-l} reads: $\sum \frac{ ( \xvc{xy} \cdot \vec{e} )^2 }{\| \xvc{xy}\|} =0$ where $\xvc{xy} = \vec{y} - \vec{x}$.
Being a sum of positive numbers, each one must be 0. Since $\vec{e}$ can be chosen arbitrarily, this means all edges in the graph are collinear to $\vec{x}_a - \vec{x}_b$. 
In other words the graph is a line, and there are no interior vertices.
\end{proof}

%-----------------------------------------
\section{Using deformations}\label{sdef}
%-----------------------------------------

%-----------------------------------------
\subsection{Bounding the total length using scaling}
%-----------------------------------------

Recall that the outer radius of a subset $S$ of a metric space is infimum of all the $r$ so that there is a ball of radius $r$ covering $S$.

\begin{lem}\label{tbndlendef-l}
Let $G \inj \rr^k$ be a critical net, then 
$L = \sum_{x \in \del X} \xvc{N(x)} \cdot \hat{\ell}$ where $L$ is the total length (and $\xvc{N(x)}$ is the vector corresponding to the [unique] neighbour of a leaf).
\end{lem}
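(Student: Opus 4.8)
The plan is to apply Lemma~\ref{tcritnet-l} to the scaling deformation $\vec{d}(z)=\vec{z}$ (with the origin as centre, as in the ``Scaling'' paragraph above). First I would pick a compact domain $D$ with smooth boundary that contains every interior vertex of $G$ but none of the leaves, chosen so that $\del D$ meets each of the finitely many leaf-edges transversally in a single point and so that no vertex accumulates on $\del D$. This places exactly one anchor $a$ on each leaf-edge $\{N(x),x\}$, with interior endpoint $x_a=N(x)$, the unique neighbour of the leaf $x$.

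With this choice the left-hand side of Lemma~\ref{tcritnet-l} simplifies immediately: each summand is $\tfrac{\vec{x}-\vec{y}}{\|\vec{x}-\vec{y}\|}\cdot(\vec{x}-\vec{y})=\|\vec{x}-\vec{y}\|$, so the sum over edges lying in $D$ equals $\sum\|\vec{x}-\vec{y}\|$. Since the edges inside $D$ are precisely the edges not incident to a leaf, and these are exactly the edges counted in the total length (leaf-edges being excluded by convention), the left-hand side is exactly $L$.

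For the right-hand side, the anchor $a$ attached to the leaf $x$ lies on the straight segment $\{N(x),x\}$, so the unit vector $\tfrac{\vec{a}-\vec{x}_a}{\|\vec{a}-\vec{x}_a\|}$ points from $N(x)$ toward $x$ and hence equals the leaf vector $\hat{\ell}$, regardless of where on that segment the anchor was placed (so no limiting argument is needed here). As $\vec{d}(x_a)=\vec{x}_a=\xvc{N(x)}$, the right-hand side becomes $\sum_{x\in\del X}\xvc{N(x)}\cdot\hat{\ell}$, which is the asserted identity.

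The point requiring the most care is admissibility of the scaling field: its support is all of $X_{int}$, which need not be finite. One must check that $\vec{d}$ is a legitimate deformation — continuous on $X$ and of compact support in $\rr^k$ — and that the resulting sums converge so that Lemma~\ref{tcritnet-l} genuinely applies; continuity at the anchors is exactly what the hypothesis ``no vertex accumulates to $\del D$'' provides. Finally, the right-hand side appears to depend on the centre of the scaling through the position vector $\xvc{N(x)}$; this apparent dependence disappears once one observes that $\sum_{x\in\del X}\hat{\ell}=\vec{0}$, obtained by feeding a constant field $\vec{d}\equiv\vec{v}$ into the same lemma (all interior edges then contribute $0$ on the left, forcing the anchor contribution to vanish for every $\vec{v}$). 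This identity doubles as a consistency check on the computation.
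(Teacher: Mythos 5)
Your proof is correct and follows essentially the same route as the paper: apply Lemma~\ref{tcritnet-l} with the scaling field $\vec{d}(z)=\vec{z}$ on a domain excluding only the leaves, observe that each interior edge contributes its length on the left, and identify the anchor term with $\xvc{N(x)}\cdot\hat{\ell}$ since the anchor sits on the leaf edge. The closing remark that origin-independence is equivalent to $\sum_{x\in\del X}\hat{\ell}=\vec{0}$ is a nice consistency check and is precisely the content of Lemma~\ref{tzersumdef-l}.
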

\begin{proof}
Look at a dilation of the whole graph except the leaves, that is $D$ excludes only the leaves and the deformation is $\vec{d}(x) = \vec{x}$ for any $x \in X_{int}$.

Then by Lemma \ref{tcritnet-l}
\[
\sum_{ \{x,y\} \in E} \| \vec{x}-\vec{y}\| 
=
\sum_{a \in A} \frac{\vec{a} - \vec{x}_a}{\| \vec{a}-\vec{x}_a\|} \cdot \vec{d}(x_a)
\]
The left-hand side is already the total length. As for the right-hand side note that it does not matter whether one consider the anchor $a$ or the leaf (which is basically just beside the anchor). Hence, if $\hat{\ell}$ denote the unit vector associated to a leaf edge
\[
L = \sum_{x \in \del X} \vec{x} \cdot \hat{\ell}\qedhere
\]
\end{proof}

\begin{cor}\label{tbndlen-c}
Let $r$ be the outer radius of a critical net whose the leaves (and their edges) have been removed. Then the total length is at most $r |\del X|$.
\end{cor}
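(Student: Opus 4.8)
The plan is to deduce Corollary \ref{tbndlen-c} directly from Lemma \ref{tbndlendef-l}, which already expresses the total length as
\[
L = \sum_{x \in \del X} \xvc{N(x)} \cdot \hat{\ell},
\]
where the sum runs over the leaves, $\xvc{N(x)}$ is the position vector of the (unique) interior neighbour of the leaf $x$, and $\hat{\ell}$ is the corresponding unit leaf vector. The key observation is that each summand is a dot product of a position vector with a \emph{unit} vector, so it is controlled by the norm of the position vector alone.

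First I would fix the origin of the ambient $\rr^k$ to be the centre of a ball of radius $r$ that contains all the interior vertices of the net; this is exactly the freedom that the definition of outer radius provides (after removing the leaves and their edges, as in the statement). With this choice, every interior vertex $y$ satisfies $\| \vec{y} \| \leq r$. In particular, for each leaf $x$, its interior neighbour $N(x)$ is an interior vertex, so $\| \xvc{N(x)} \| \leq r$.

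Next I would apply the Cauchy--Schwarz inequality to each term of the sum from Lemma \ref{tbndlendef-l}. Since $\hat{\ell}$ is a unit vector,
\[
\xvc{N(x)} \cdot \hat{\ell} \leq \| \xvc{N(x)} \| \, \| \hat{\ell} \| = \| \xvc{N(x)} \| \leq r.
\]
Summing over all $|\del X|$ leaves then yields $L \leq r |\del X|$, which is precisely the claim.

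The only point requiring a little care — and the step I would flag as the main (though minor) subtlety — is the translation-invariance that legitimises placing the origin at the centre of the covering ball. Lemma \ref{tbndlendef-l} involves the position vectors $\xvc{N(x)}$ explicitly, so its statement is \emph{a priori} origin-dependent; I would note that the underlying deformation argument (scaling about an arbitrary centre, as discussed in the \textbf{Scaling} paragraph) is valid with the origin placed anywhere, so we are free to choose it at the centre that realises the outer radius. With that justified, the bound is immediate and requires no further estimate.
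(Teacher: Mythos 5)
Your proof is correct and follows essentially the same route as the paper's: the paper likewise invokes the arbitrariness of the origin in Lemma \ref{tbndlendef-l} and the bound $\sum_{x \in \del X} \xvc{N(x)} \cdot \hat{\ell} \leq \sum_{x \in \del X} \|\xvc{N(x)}\|$, with your version merely spelling out the choice of origin at the centre of the covering ball.
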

\begin{proof}
Since the choice of the origin in Lemma \ref{tbndlendef-l} is arbitrary and $\displaystyle \sum_{x \in \del X} \xvc{N(x)} \cdot \hat{\ell} \leq \sum_{x \in \del X} \|\xvc{N(x)}\|$, the conclusion follows.
\end{proof}

Note that it is possible to find a bound on the total length by using dilation (instead of scaling). The bound is then the same as that of Lemma \ref{tisop-l}. Since the proof of Lemma \ref{tisop-l} is much more entertaining, we leave this alternative proof as an exercise.

Recall that we did not assume finiteness of the total length as an hypothesis of Lemma \ref{tbndlendef-l}.
\begin{cor}
Let $G \inj \rr^k$ be a (bounded) critical net (with finitely many leaves). Then its total length is finite.
\end{cor}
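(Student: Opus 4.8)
The plan is to read the finiteness off the bound already obtained in Corollary \ref{tbndlen-c}. That corollary asserts $L \le r\,|\del X|$, where $r$ is the outer radius (the radius of the smallest ball containing the interior vertices) and $|\del X|$ is the number of leaves. By hypothesis the net is contained in a bounded domain, so $r<\infty$, and it has finitely many leaves, so $|\del X|<\infty$; hence the product $r\,|\del X|$ is finite and $L\le r\,|\del X|<\infty$. The only thing that needs care is that this reasoning not be circular: the bound of Corollary \ref{tbndlen-c} must be available \emph{before} one knows that $L$ is finite. This is exactly the content of the remark that Lemma \ref{tbndlendef-l} was proved without assuming finiteness of $L$; the identity $L=\sum_{x\in\del X}\xvc{N(x)}\cdot\hat\ell$ has a manifestly finite right-hand side (a finite sum of bounded inner products), so once that identity is justified the conclusion is immediate.

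First I would therefore make sure the scaling argument of Lemma \ref{tbndlendef-l} is legitimate for a net that may, a priori, have infinite total length and infinitely many interior vertices. The delicate point is that the global scaling deformation $\vec d(x)=\vec x-\vec c$ (with $\vec c$ the centre of the outer-radius ball) need not be an admissible deformation in the sense of the Definition: if the interior vertices accumulate, the closure of its support in $\rr^k$ may fail to be a compact subset of $X_{int}$, so Lemma \ref{tcritnet-l} does not apply verbatim.

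To circumvent this I would run the argument through an exhaustion. Fix the centre $\vec c$ and choose an increasing sequence of continuous cutoffs $\chi_n:X\to[0,1]$, each equal to $1$ on a compact piece $K_n\subset X_{int}$, vanishing in a neighbourhood of the leaves and of the set of accumulation points of $X$, and with $K_n\nearrow X_{int}$. Each $\vec d_n=\chi_n\cdot(\vec x-\vec c)$ is then admissible, so Lemma \ref{tcritnet-l} yields an identity for every $n$. As $n\to\infty$, the edges on which $\chi_n\equiv 1$ contribute, by monotone convergence, the full interior length $L$, while the anchor terms converge to $\sum_{x\in\del X}\xvc{N(x)}\cdot\hat\ell$. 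Passing to the limit recovers the identity of Lemma \ref{tbndlendef-l}, and then Corollary \ref{tbndlen-c} finishes the proof.

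The main obstacle is the control of the transition terms produced by the cutoff, namely the edges straddling the region where $\chi_n$ drops from $1$ to $0$ and the behaviour of the deformation near the accumulation points of the vertices: one must show that these contributions are non-negative (or at least vanish in the limit) so that the left-hand side of the criticality identity increases monotonically towards $L$ while the right-hand side stays bounded by $r\,|\del X|$. Once this monotonicity (or vanishing) is established, the finiteness of $L$ follows at once; the remaining steps are the routine verifications that the cutoffs can be chosen as claimed and that the anchor sum is insensitive to the precise placement of the cutoff.
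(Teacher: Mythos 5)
Your proposal is correct and follows the paper's own (implicit) argument exactly: the corollary is stated immediately after the remark that Lemma \ref{tbndlendef-l} does not assume finiteness of $L$, so finiteness is read off the identity $L=\sum_{x\in\del X}\xvc{N(x)}\cdot\hat\ell$, whose right-hand side is a finite sum of bounded terms, together with Corollary \ref{tbndlen-c}. Your additional exhaustion-by-cutoffs discussion is extra rigour about the admissibility of the global scaling deformation rather than a different route; the paper simply applies the deformation $\vec d(x)=\vec x$ on $D$ excluding the leaves directly.
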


%-----------------------------------------
\subsection{Rotation and further use of scaling}
%-----------------------------------------

Let's call the unit vector of the edge at a leaf a leaf vector (stem vector would be botanically more pertinent, but mathematically confusing).
\begin{lem}\label{tzersumdef-l}
If $G$ is a (bounded) critical net in $\rr^k$, then the sum of the leaf vectors is $0$.
\end{lem}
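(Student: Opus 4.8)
The plan is to exploit the fact that the identity of Lemma \ref{tbndlendef-l} holds for \emph{every} choice of origin, whereas both its left-hand side $L$ and each leaf vector $\hat{\ell}_x$ are manifestly independent of that choice. Forcing the two sides to agree for all origins will pin down the sum of the leaf vectors. First I would record the ingredients: fix an origin $O$ and write $\xvc{N(x)}$ for the position (relative to $O$) of the interior neighbour of a leaf $x$, so that Lemma \ref{tbndlendef-l} reads $L = \sum_{x \in \del X} \xvc{N(x)} \cdot \hat{\ell}_x$, where $\hat{\ell}_x$ is the leaf vector at $x$. Being the normalised difference of two position vectors, $\hat{\ell}_x$ is unchanged under a translation of the origin; similarly $L = \sum_{\{x,y\} \in E} \| \vec{x} - \vec{y} \|$ depends only on differences and so is origin-free. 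Since $G$ is bounded with finitely many leaves, Corollary \ref{tbndlen-c} gives $L \leq r |\del X| < \infty$, so all the sums below are genuinely finite.

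Next I would shift the origin. Replacing $O$ by $O' = O + \vec{c}$ replaces each $\xvc{N(x)}$ by $\xvc{N(x)} - \vec{c}$ while leaving $L$ and every $\hat{\ell}_x$ untouched. Applying Lemma \ref{tbndlendef-l} with respect to $O'$ therefore yields
\[
L = \sum_{x \in \del X} \big( \xvc{N(x)} - \vec{c} \big) \cdot \hat{\ell}_x = L - \vec{c} \cdot \sum_{x \in \del X} \hat{\ell}_x .
\]
Hence $\vec{c} \cdot \sum_{x \in \del X} \hat{\ell}_x = 0$, and since $\vec{c} \in \rr^k$ is arbitrary the sum of the leaf vectors vanishes.

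An equivalent, more hands-on route is to feed a constant deformation $\vec{d}(x) = \vec{c}$ on all interior vertices directly into Lemma \ref{tcritnet-l}: the left-hand side vanishes term by term because $\vec{d}(x) - \vec{d}(y) = \vec{0}$ on every interior edge, while the right-hand side collapses to $\big( \sum_{a \in A} \hat{\ell}_a \big) \cdot \vec{c}$, giving the same conclusion. This variant has the virtue of never invoking finiteness of $L$, since its left-hand side is exactly zero and its right-hand side is a finite sum over the anchors.

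The only point requiring genuine care is the same one already implicit in Lemma \ref{tbndlendef-l}: a global deformation supported on all of $X_{int}$ must be admissible even when the interior vertices accumulate. In the primary argument this is sidestepped entirely, because I only invoke the already-proven origin-independent formula; in the hands-on variant one justifies the constant deformation by the same cut-off/limiting device used for the scaling deformation in Lemma \ref{tbndlendef-l}. I expect this compactness-of-support issue to be the main (and essentially only) obstacle, the algebra itself being immediate.
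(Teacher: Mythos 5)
Your proposal is correct and is essentially the paper's own argument: both proofs exploit the fact that the scaling identity of Lemma \ref{tbndlendef-l} holds for every choice of origin while its left-hand side and the leaf vectors do not, the only difference being that the paper differentiates along a moving origin where you take a finite translation (an equivalent, slightly cleaner step). Your constant-deformation variant is likewise the alternative the paper itself points out immediately after its proof ($d(\vec{x})=\vec{e}$ fed into Lemma \ref{tcritnet-l}).
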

\begin{proof}
Recall (see Lemma \ref{tbndlendef-l}) that scaling yields the equality:
$ \displaystyle L_{in} = \sum_{a \in A} \hat{\ell}_a \cdot \vec{x}_a$.
The choice of the origin does not play a r\^ole in the left-hand side, but does in the right-hand side. Let the origin move along a curved parametrised by $s$, then $ \displaystyle L_{in} = \sum_{a \in A} \hat{\ell}_a \cdot \vec{x}_a(s)$ (since $L_{in}$ and $\hat{\ell}_a$ do not depend on the position of the origin). Differentiating with respect to $s$ leads to:
\[
 0 = \sum_{a \in A} \hat{\ell}_a \cdot \vec{x}_a'(s)
\]
Note that all $x_a'(s)$ are equal (since we are only translating) and can be arbitrary. Hence $ \sum_{a \in A} \hat{\ell}_a=0$
\end{proof}
There is a physical interpretation of the above lemma. 
If all edges were some kind of springs which pull with unit length on both vertices at the end, then Lemma \ref{tbndlendef-l} just shows that the graph as a whole has zero net force.

See Lemma \ref{tconvhul-l} for an application of the previous lemma.

Note that Lemma \ref{tzersumdef-l} could also be proved by using the translation as a deformation ($d(\vec{x}) = \vec{e}$ for some fixed vector $\vec{e}$.) But differentating another deformation is a promising way of producing new quantitative equalities in critical nets.

In a similar vein, if one could look at a deformations given by rotating along the axis parallel to $\vec{e}$ through the origin: $d(x) = \vec{e} \times \vec{x}$. 
\begin{lem}\label{tzertordef-l}
If $G$ is a (bounded) critical net in $\rr^k$, then, for any $\vec{e}$,
$ \sum_{a \in A} \hat{\ell}_a \cdot (\vec{e} \times \vec{x}_a) =0$
\end{lem}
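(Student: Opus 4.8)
The plan is to feed the rotational field $\vec{d}(x) = \vec{e} \times \vec{x}$ directly into Lemma \ref{tcritnet-l}, in exactly the same way that scaling was fed in to prove Lemma \ref{tbndlendef-l} and dilation to prove Lemma \ref{tthreever-l}. As in those lemmas, I would take the domain $D$ to contain every interior vertex while excluding a small neighbourhood of each leaf; this puts one anchor $a$ on each leaf edge, with interior neighbour $x_a$, and (using the standing assumption that no vertex accumulates on $\del D$) the field $\vec{d}$ can be taken as an admissible deformation: continuous on $X$, supported in $D$, and equal to $\vec{0}$ on the anchors. Since $G$ has finitely many leaves, the anchor set $A$ is finite, so the right-hand side of Lemma \ref{tcritnet-l} is a finite sum and poses no convergence issue.

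The heart of the argument is that the interior (left-hand) sum vanishes term by term. For an edge $\{x,y\}$ lying in $D$, the associated summand is
\[
\frac{\vec{x} - \vec{y}}{\|\vec{x} - \vec{y}\|} \cdot \big( \vec{e} \times \vec{x} - \vec{e} \times \vec{y} \big)
=
\frac{\vec{x} - \vec{y}}{\|\vec{x} - \vec{y}\|} \cdot \big( \vec{e} \times (\vec{x} - \vec{y}) \big) = 0,
\]
because $\vec{e} \times \vec{v}$ is orthogonal to $\vec{v}$ for every $\vec{v}$. Lemma \ref{tcritnet-l} then forces the boundary sum to vanish too, and since $\tfrac{\vec{a} - \vec{x}_a}{\|\vec{a} - \vec{x}_a\|} = \hat{\ell}_a$ is the leaf vector identified in Lemma \ref{tbndlendef-l}, this reads
\[
0 = \sum_{a \in A} \hat{\ell}_a \cdot (\vec{e} \times \vec{x}_a),
\]
which is exactly the claim.

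I expect the only real subtleties to be bookkeeping rather than substance. The first is admissibility of $\vec{d}$: unlike the translation that reproves Lemma \ref{tzersumdef-l}, the rotation field does not vanish at the anchors, so one must check it can be cut off to a compactly supported admissible deformation without altering the evaluations $\vec{d}(x_a)$ that actually enter the identity — this is precisely where the no-accumulation-at-$\del D$ hypothesis does its work. The second is purely notational: in $\rr^k$ with $k \neq 3$ the symbol $\vec{e} \times \vec{x}$ should be read as the action $A\vec{x}$ of the skew-symmetric generator of a one-parameter rotation group, and the single property used above, $\vec{v} \cdot A\vec{v} = 0$, is exactly skew-symmetry $A^{\top} = -A$, so the computation is unchanged. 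As with the remark following Lemma \ref{tzersumdef-l}, the value of this statement lies less in the identity itself than in the method: substituting structured one-parameter deformations into Lemma \ref{tcritnet-l} systematically yields such moment-type constraints on the leaf data.
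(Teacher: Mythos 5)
Your proposal is correct and follows the paper's own route exactly: the paper likewise proves this by plugging the rotational deformation $d(x) = \vec{e} \times \vec{x}$ into Lemma \ref{tcritnet-l}, with the left-hand side vanishing term by term because $\vec{e} \times (\vec{x}-\vec{y})$ is orthogonal to $\vec{x}-\vec{y}$. The paper's proof is just more terse ("a direct application"); your explicit verification of the interior cancellation and the remark on interpreting $\times$ as a skew-symmetric generator in $\rr^k$ are sound elaborations of the same argument.
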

\begin{proof}
A direct application of Lemma \ref{tcritnet-l} to the above deformation yields:
\[
 0= \sum_{a \in A} \hat{\ell}_a \cdot (\vec{e} \times \vec{x}_a) \qedhere
\]
\end{proof}

Again there is physical interpretation of the previous lemma, namely: the torque on the whole graph is trivial.

%-----------------------------------------
\subsection{Bounding valency using swelling}
%-----------------------------------------

Let us introduce another particularly useful deformation which we will dub ``swelling'', namely $d(x) = \hat{x} := \frac{ \vec{x} }{\|\vec{x}\|}$ if $\vec{x} \neq \vec{0}$ and $= \vec{0}$ otherwise. 
Note that because of the continuity assumption of $d$, this deformation cannot be applied if there is an accumulation point at the origin.

% (It could also seem like this deformation is hard to realise if there are any accumulation points at all. 
% Indeed, we assume that the edges remain straight while the vertices are moved.
% This means that the vector field (corresponding to the deformation) would be quite difficult to describe.
% The easiest solution is not to try to keep the edges straight.
% In that case, the left-hand side of the equation in Lemma \ref{tcritnet-l} becomes even larger.)

\begin{lem}\label{tlemdeg-l}
Let $x$ be an isolated vertex, then $\nu(x) \leq |\del X|$.
\end{lem}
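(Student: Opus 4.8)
The plan is to feed the swelling deformation, centred at $x$, into the criticality condition of Lemma~\ref{tcritnet-l}. First I would translate the graph so that $\vec{x} = \vec{0}$; then the swelling $\vec{d}(y) = \hat{y}$ has its centre at $x$, and since $x$ is \emph{isolated} there are no vertices accumulating at the origin, so $\vec{d}$ is continuous and the deformation is admissible (this is exactly where the isolation hypothesis is used). I would take $D$ to be a ball large enough to contain all interior vertices, with a small neighbourhood of each leaf removed, so that the anchor set $A$ sits next to the leaves and $|A| = |\del X|$.

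Everything then reduces to evaluating the two sides of Lemma~\ref{tcritnet-l} for this $\vec{d}$. The computation I would carry out is the elementary identity, for an edge $\{y,z\}$ with $x \notin \{y,z\}$,
\[
\frac{(\vec{y}-\vec{z})\cdot(\hat{y}-\hat{z})}{\|\vec{y}-\vec{z}\|}
= \frac{(\|\vec{y}\|+\|\vec{z}\|)(1-\cos\theta)}{\|\vec{y}-\vec{z}\|} \geq 0 ,
\]
where $\theta$ is the angle at the origin between $\vec{y}$ and $\vec{z}$; this follows from $\vec{a}\cdot\hat{a} = \|\vec{a}\|$ and $\vec{y}\cdot\vec{z} = \|\vec{y}\|\,\|\vec{z}\|\cos\theta$. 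For an edge $\{x,y\}$ incident to $x$ with $y$ an interior vertex, the contribution is instead exactly $\tfrac{-\vec{y}}{\|\vec{y}\|}\cdot(-\hat{y}) = 1$. Hence the left-hand side of Lemma~\ref{tcritnet-l} is at least the number of interior edges at $x$, \ie it is $\geq \nu_{int}(x)$.

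For the right-hand side, each anchor contributes $\hat{\ell}_a \cdot \vec{d}(x_a)$, the dot product of the leaf vector $\hat{\ell}_a$ with $\vec{d}(x_a)$: anchors attached to $x$ itself contribute $0$ because $\vec{d}(x)=\vec{0}$, while every other anchor contributes at most $1$ (product of two unit vectors). As there are $|\del X|$ anchors in all, of which $\nu_{leaf}(x)$ are attached to $x$, the right-hand side is at most $|\del X| - \nu_{leaf}(x)$. Combining the two estimates gives $\nu_{int}(x) \leq |\del X| - \nu_{leaf}(x)$, that is $\nu(x) = \nu_{int}(x)+\nu_{leaf}(x) \leq |\del X|$, as claimed.

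The step I expect to be the real content is the sign of the off-vertex terms: the identity above shows that \emph{every} edge not touching $x$ contributes in the same (non-negative) direction, so it can only help the inequality rather than spoil it. This monotonicity is precisely the Euclidean (non-positively curved) phenomenon that fails on the sphere, and it is what lets the purely local quantity $\nu(x)$ be controlled by the global leaf count $|\del X|$. The remaining points are the routine admissibility conditions for the deformation — continuity at $x$, compact support in $X_{int}$, and $\del D$ meeting only edges — which the isolation of $x$ and the removal of the leaf neighbourhoods are designed to guarantee.
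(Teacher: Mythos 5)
Your proof is correct and follows the same route as the paper: apply the swelling deformation $\vec{d}(y)=\hat{y}$ centred at the isolated vertex via Lemma~\ref{tcritnet-l}, use positivity of the off-vertex edge terms, and bound the anchor sum by $|\del X|$. You are in fact slightly more explicit than the paper in two places --- the identity $(\vec{y}-\vec{z})\cdot(\hat{y}-\hat{z}) = (\|\vec{y}\|+\|\vec{z}\|)(1-\cos\theta)$ justifying the positivity, and the separate accounting of leaf-neighbours of $x$ --- both of which are welcome.
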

\begin{proof}
Take $D$ to be a domain which contains the whole graph (except the leaves) and consider the above deformation $d(x) = \hat{x}$.
Since it could happen that there is a vertex at the origin (the centre of the swelling), let us denote $0$ the vertex at the origin (if there is one), $\nu(0)$ the degree of that vertex and $E_0$ the set of unoriented edges without the edges incident with $0$.
With these notations, Lemma \ref{tcritnet-l} reads:
\[
\sum_{x \in N(0)} \frac{\vec{x} }{\| \vec{x} \|} \cdot  \hat{x}
+
\sum_{\{x,y\} \in E_0} \frac{\vec{x} - \vec{y} }{\| \vec{x} - \vec{y} \|} \cdot (\hat{x} -  \hat{y} )
= 
\sum_{ a \in A} \frac{\vec{a} - \vec{x}_a }{\| \vec{a} - \vec{x}_a \|} \cdot  \hat{x}_a
\]
The first term is $N(0)$, the second is positive and the sum on the right-hand side is bounded by $|\del X|$ (since there are as many anchors as leaves and the dot product of unit vectors is $\leq 1$). Hence $\nu(0) \leq |\del X|$
\end{proof}

\begin{rem}
There are two noticeable elements in the proof:
\begin{enumerate}
 \item If the origin goes through an edge, and one lets $E_0$ be the set containing all other edges, then the proof shows that 
 \[
\nu(0) 
+
\sum_{\{x,y\} \in E_0} \frac{\vec{x} - \vec{y} }{\| \vec{x} - \vec{y} \|} \cdot (\hat{x} -  \hat{y} )
= 
\sum_{ a \in A} \hat{\ell}_a \cdot  \hat{x}_a
\]  
where $\hat{\ell}_a$ is the unit vector of the anchor edge and $\nu(0)$ is the degree at the origin in the extended sense: $\nu(0)$ is the degree of the vertex at the origin if there is a vertex there, $\nu(0) = 2$ if an edge goes through the origin and $\nu(0) = 0$ if no part of the graph goes through the origin.

\item 
Let us call the optical length of the edge, the length of the arc if one centrally projects the edge on the unit circle.
It would be really nice if the quantity $\sum_{\{x,y\} \in E_0} \frac{\vec{x} - \vec{y} }{\| \vec{x} - \vec{y} \|} \cdot (\hat{x} -  \hat{y} )$ would be bounded from below in term of this optical length. 
But such a lower bound is not possible.
\end{enumerate}

\end{rem}

Lemma \ref{tlemdeg-l} and Corollary \ref{tbndlen-c} complete the proof of Theorem \ref{lethmnew}.

%-----------------------------------------
\subsection{Length density}\label{slenden}
%-----------------------------------------

In order to speak of length density, it is much more convenient to extend all leaves to infinity. 
That is, if $l$ is a leaf and $a$ its neighbour, then replace the segment from $a$ to $l$ by a half-line (or ray) starting at $a$ and going through $l$.
These half lines might intersect; just add an artificial vertex (of even degree) at those intersection points.

Now for any $r \in \rr_{>0}$, let $L(r)$ be the length of the graph inside the ball of radius $r$ around the origin
and let $\lambda(r) = \frac{L(r)}{r}$ be the length density.

It is easy to see that $\displaystyle \lim_{r \to \infty} \lambda(r) = |\del X|$ (since the total length, which excludes the leaf edges, is finite, see Corollary \ref{tbndlen-c}).
If there is no accumulation point at the origin, it is also clear that $\displaystyle \lim_{r \to 0} \lambda(r)$ tends to the degree of the vertex at the origin (degree 2 means that there is an edge through the origin).

\begin{lem}
$\lambda(r)$ is an increasing function. In fact, if $A_r$ denotes the intersection of the ball of radius $r$ with the graph,
\[
\lambda(r) = \sum_{a \in A_r} \hat{x}_a \cdot \hat{\ell}
\qquad \text{and} \qquad
\lambda'(r) = \frac{1}{r} \sum_{a \in A_r} \frac{1}{\hat{x}_a \cdot \hat{\ell}} - \hat{x}_a \cdot \hat{\ell}
\]
although $\lambda'$ may not be defined for countably many values of $r$ (at those values, the equality turns [in the sense of distributions] into a lower bound ). 
\end{lem}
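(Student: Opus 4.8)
We have a critical net with all leaves extended to infinity (rays), with an artificial even-degree vertex placed at any ray intersection. For a ball of radius $r$ centered at the origin, $A_r$ is the set of anchor points where the graph crosses the boundary sphere of radius $r$, $\hat{x}_a$ is the outward unit radial vector at such a crossing point, and $\hat{\ell}$ is the unit tangent vector of the edge passing through that crossing (pointing outward). So $\hat{x}_a \cdot \hat{\ell}$ is the cosine of the angle between the edge and the radial direction at the crossing. The claim gives a closed formula for $\lambda(r)$ and its derivative, and asserts monotonicity.

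**The plan.** The identity for $\lambda(r)$ is just the scaling lemma applied to the restriction $G_{D}$ where $D$ is the ball of radius $r$. First I would invoke Lemma~\ref{tbndlendef-l} (or equivalently Lemma~\ref{tcritnet-l} with the scaling deformation $\vec{d}(x) = \vec{x}$) on the restricted graph $G_{\ball(r)}$: the restriction of a critical net to a domain is again a critical net whose leaves are precisely the anchors in $A_r$. The scaling lemma then reads $L(r) = \sum_{a \in A_r} \vec{x}_a \cdot \hat{\ell}$. Since each anchor $a \in A_r$ sits on the sphere of radius $r$, we have $\vec{x}_a = r\,\hat{x}_a$, so $L(r) = r \sum_{a \in A_r} \hat{x}_a \cdot \hat{\ell}$ and hence $\lambda(r) = L(r)/r = \sum_{a \in A_r} \hat{x}_a \cdot \hat{\ell}$, which is the first formula.

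**Computing the derivative.** For the derivative, I would differentiate $L(r)$ directly. As $r$ grows, each edge segment inside the ball lengthens as its outer crossing point moves radially outward along the (fixed) edge. A crossing point at radius $r$ lies on an edge making angle $\theta_a$ with the radial direction, where $\cos\theta_a = \hat{x}_a \cdot \hat{\ell}$; as $r$ increases by $\mathrm{d}r$, the crossing point advances along the edge by $\mathrm{d}r / \cos\theta_a = \mathrm{d}r/(\hat{x}_a \cdot \hat{\ell})$ in arclength, contributing that much new edge length. Summing over all crossings gives $L'(r) = \sum_{a \in A_r} 1/(\hat{x}_a \cdot \hat{\ell})$. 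Then from $\lambda = L/r$ one computes $\lambda'(r) = L'(r)/r - L(r)/r^2 = \frac{1}{r}\big( \sum_{a} \tfrac{1}{\hat{x}_a \cdot \hat{\ell}} - \sum_a \hat{x}_a \cdot \hat{\ell} \big)$, using the first formula to rewrite $L(r)/r^2 = \lambda(r)/r$. This is exactly the stated expression. Monotonicity is then immediate since $t + 1/t \geq 2 > 0$ forces each summand $\tfrac{1}{\hat{x}_a \cdot \hat{\ell}} - \hat{x}_a \cdot \hat{\ell} \geq 0$ (note $0 < \hat{x}_a \cdot \hat{\ell} \leq 1$ because the radial direction and the outward edge direction make an acute angle at a crossing out of the ball), with equality only for radial edges.

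**The main obstacle.** The delicate point is the non-smooth behaviour at the countably many critical radii where $A_r$ changes: these are the radii at which the sphere passes through a vertex of the graph, or is tangent to an edge, so that crossings appear or disappear and the count $|A_r|$ jumps. At such radii $\lambda'$ is genuinely undefined. I would handle this by arguing that $\lambda$ is continuous and piecewise-$\mathsf{C}^1$ with the stated derivative on each open interval between consecutive critical radii, and that at a critical radius the jump is in the favorable direction: when the sphere crosses a vertex $v$, the incoming and outgoing edge contributions are controlled by the criticality condition at $v$ (Lemma~\ref{tcondver-l}), so the one-sided derivatives only increase $\lambda$. Phrasing this as a distributional inequality $\lambda' \geq \frac{1}{r}\sum_{a}\big(\tfrac{1}{\hat{x}_a \cdot \hat{\ell}} - \hat{x}_a \cdot \hat{\ell}\big)$ at those points, as the statement indicates, captures exactly that the measure $\lambda'\,\mathrm{d}r$ acquires a nonnegative atomic part there; this preserves monotonicity globally. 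The remaining care is bookkeeping: ensuring that a crossing always has $\hat{x}_a \cdot \hat{\ell} > 0$ (an edge leaving the ball never does so tangentially except at the measure-zero critical radii), which keeps all denominators positive and the summands well-defined.
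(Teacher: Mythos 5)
Your proof is correct and follows essentially the same route as the paper: the first identity comes from applying the scaling relation (Lemma \ref{tbndlendef-l}) to the net restricted to the ball of radius $r$ (with fake degree-$2$ vertices as anchors on the sphere), and monotonicity from $\tfrac{1}{t}-t\geq 0$ for $t\in(0,1]$, with the countably many tangency/vertex radii set aside exactly as in the paper. The only cosmetic difference is that you get the derivative by computing $L'(r)=\sum_{a\in A_r}1/(\hat{x}_a\cdot\hat{\ell})$ geometrically and applying the quotient rule to $\lambda=L/r$, whereas the paper differentiates each summand $\hat{x}_a\cdot\hat{\ell}$ with respect to $r$ directly; the two computations are equivalent.
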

\begin{proof}
First, note that if there is an edge tangent to the circle of radius $r$, then $\lambda$ will not be differentiable there. However, there are only countably many such edges (since there are countably many vertices). Thus these will be ignored for the rest of the proof. If one wishes to think as this differential in the sense of distributions, this makes the estimate a lower bound instead of an equality. 

Add some fake vertices (because they are of degree 2) at every point which intersect the sphere of radius $r$. 
(This is a way to make sure that the $\vec{x}_a$ all have the same norm; the set of anchors will be denoted $A_r$ instead of $A$ to stress this dependence.)
If there is a vertex on the sphere at that radius, just prolong all the edges coming from inside the sphere and put leaves a bit further.
Then by Lemma \ref{tbndlendef-l}
\[
L(r) = \sum_{a \in A} \vec{x}_a \cdot \hat{\ell} = r \sum_{a \in A} \hat{x}_a \cdot \hat{\ell}  
\qquad \text{hence } \lambda(r) = \sum_{a \in A_r} \hat{x}_a \cdot \hat{\ell}.
\]
We need to compute $\frac{\mathsf{d}}{\mathsf{d}r} \Big|_{r = \|\vec{x}_a\|} \hat{x}_a \cdot \hat{\ell}$.
Note that from the position $\vec{x}_a$ the edge is going in the direction $\hat{l}$. 
Hence, a first step is to compute
\[
\frac{\mathsf{d}}{\mathsf{d}t} \frac{\vec{y} + t \hat{\ell} }{\| \vec{y} + t \hat{\ell} \|} \cdot \hat{\ell} 
= 
\frac{\mathsf{d}}{\mathsf{d}t} \frac{\vec{y} \cdot  \hat{\ell} + t}{\| \vec{y} + t \hat{\ell} \|} 
= 
\frac{1}{\| \vec{y} + t \hat{\ell} \|} 
-
\frac{\vec{y} \cdot  \hat{\ell} + t}{\| \vec{y} + t \hat{\ell} \|^3  } 
( \vec{y} + t \hat{\ell} ) \cdot \hat{\ell} 
\]
Setting $t =0$, one gets
\[
\frac{\mathsf{d}}{\mathsf{d}t} \Big|_{t =0} \frac{\vec{y} + t \hat{\ell} }{\| \vec{y} + t \hat{\ell} \|} \cdot \hat{\ell} 
= 
\frac{1}{\| \vec{y}\|} 
-
\frac{(\vec{y} \cdot  \hat{\ell})^2 }{\| \vec{y}  \|^3  } 
=
\frac{1-(\hat{y} \cdot  \hat{\ell})^2 }{\| \vec{y}  \|  } 
\]
Next, one needs to differentiat with respect to $r$ and not $t$. Since $r = \| \vec{y} + t \hat{\ell} \|$, $\frac{\mathsf{d}r}{\mathsf{d}t}_{t =0} = \hat{y} \cdot \hat{\ell}$.
Hence,
$
\frac{\mathsf{d}}{\mathsf{d}r} \Big|_{r =\|\vec{y}\|} \hat{y} \cdot \hat{\ell} 
= 
(\frac{\mathsf{d}r}{\mathsf{d}t} )^{-1} \frac{\mathsf{d}}{\mathsf{d}t} \Big|_{t =0} \frac{\vec{y} + t \hat{\ell} }{\| \vec{y} + t \hat{\ell} \|} \cdot \hat{\ell} 
$.
This gives
\[
\frac{\mathsf{d}}{\mathsf{d}r} \Big|_{r =\|\vec{y}\|} \hat{y} \cdot \hat{\ell}
=
\frac{1}{\hat{y} \cdot  \hat{\ell}}
\frac{1-(\hat{y} \cdot  \hat{\ell})^2 }{\| \vec{y}  \|  } 
=
\frac{1}{\|\vec{y}\|} \; \Big( \frac{1}{\hat{y} \cdot \hat{\ell}} - \hat{y} \cdot \hat{\ell} \Big)
\]
Replacing $\vec{x}_a$ by $\vec{y}$ and using $\|\vec{x}_a\| = r$, one gets
\[ 
\lambda'(r) = \frac{1}{r} \sum_{a \in A_r} \frac{1}{\hat{x}_a \cdot \hat{\ell}} - \hat{x}_a \cdot \hat{\ell} 
\qedhere
\]
\end{proof}
Here is an important
\begin{cor}
The length density $\lambda(r)$ increases from the (generalised) degree to the number of leafs $|\del X|$.
\end{cor}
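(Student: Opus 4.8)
The plan is to combine the two formulas just established in the preceding lemma with the two limiting computations recorded earlier, so that the corollary becomes essentially a monotonicity argument. First I would invoke the formula $\lambda'(r) = \frac{1}{r} \sum_{a \in A_r} \big( \frac{1}{\hat{x}_a \cdot \hat{\ell}} - \hat{x}_a \cdot \hat{\ell} \big)$. For each anchor $a$, the quantity $c_a := \hat{x}_a \cdot \hat{\ell}$ is the cosine of the angle between the outgoing direction of the edge crossing the sphere and the radial direction $\hat{x}_a$; because the edge crosses the sphere of radius $r$ from inside to outside, this cosine satisfies $0 < c_a \leq 1$. The elementary inequality $\tfrac{1}{c} - c \geq 0$ for $c \in (0,1]$ then makes every summand nonnegative, so $\lambda'(r) \geq 0$ and $\lambda$ is increasing. (At the countably many radii where an edge is tangent to the sphere the lemma already tells us the equality degrades to a lower bound, which only reinforces monotonicity.)

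Having monotonicity, I would pin down the two endpoints of the range. For the lower limit, the text already notes that if there is no accumulation point at the origin then $\lim_{r \to 0} \lambda(r)$ equals the (generalised) degree of the vertex at the origin: for small $r$ the ball meets only the edges incident to that vertex, each contributing a single anchor with $\hat{x}_a \cdot \hat{\ell} \to 1$, so $\lambda(r) = \sum_{a \in A_r} \hat{x}_a \cdot \hat{\ell} \to \nu(0)$. For the upper limit, the excerpt observes $\lim_{r \to \infty} \lambda(r) = |\del X|$, which follows because the genuine (leaf-removed) total length is finite by Corollary \ref{tbndlen-c}, so for $r$ large all the interior length is captured and only the $|\del X|$ leaf rays still cross the sphere, each eventually radial so that $\hat{x}_a \cdot \hat{\ell} \to 1$. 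Since an increasing function runs from its infimum at $r \to 0$ to its supremum at $r \to \infty$, the corollary follows immediately.

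The one point demanding genuine care is the sign claim $0 < c_a \le 1$, i.e. that a ray issuing from a point on the sphere and leaving the ball makes an acute angle with the radial direction. This is the crux: it must be argued that the relevant half-edge at each anchor is the one pointing \emph{outward}, so that $\hat{x}_a \cdot \hat{\ell} > 0$ rather than being allowed to vanish or turn negative, and that $c_a = 0$ only occurs for the countably many tangential crossings already set aside. I expect this to be the main obstacle, since it is exactly where one uses that the leaves have been pushed to infinity along outgoing rays and that each crossing of the sphere is transversal; once the orientation of $\hat{\ell}$ is fixed as the outward leaf vector, the inequality $\tfrac1c - c \ge 0$ is immediate and the rest is bookkeeping with the two limits.
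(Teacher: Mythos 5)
Your argument is correct and is exactly the one the paper intends: the corollary is stated as an immediate consequence of the preceding lemma (monotonicity from $\lambda'(r)\geq 0$ since each summand $\tfrac{1}{c}-c\geq 0$ for $c\in(0,1]$) together with the two limits $\lim_{r\to 0}\lambda(r)=\nu(0)$ and $\lim_{r\to\infty}\lambda(r)=|\del X|$ already recorded in the text. Your extra care about the sign $\hat{x}_a\cdot\hat{\ell}>0$ is well placed and easily settled: for an anchor $\vec{a}$ on the sphere and the other endpoint $\vec{x}$ strictly inside, $\vec{a}\cdot(\vec{a}-\vec{x})\geq r^2-r\|\vec{x}\|>0$, with the tangential case excluded as in the lemma.
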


The results thus far may be used to prove the following 
\begin{lem}\label{tfinver-l}
Assume $G$ is a bounded critical net in $\rr^k$ with countably many vertices. If the number of leaves is finite, then the total number of vertices is finite.
\end{lem}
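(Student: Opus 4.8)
The plan is to argue by contradiction, following the Allard--Almgren philosophy (cf. \cite{AA}): assuming infinitely many vertices, I would produce an accumulation point of interior vertices and show that the length density there is incompatible with the bounds already established.

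\textbf{Reduction to ruling out accumulation.} Since there are only finitely many leaves and the net is bounded, if the vertex set were infinite then the interior vertices would accumulate (Bolzano--Weierstrass) at some $\vec q \in \rr^k$; I would fix a sequence of distinct interior vertices $p_i \to q$ with $d_i := \|\vec p_i - \vec q\|$ strictly decreasing to $0$. Every genuine interior vertex has valency $\geq 3$: the balancing condition of Lemma \ref{tcondver-l} forces a degree-$2$ vertex to be a straight pass-through, i.e. a fake vertex. Hence the localised density at $p_i$ equals $\lambda_{p_i}(0^+) = \nu(p_i) \geq 3$. Centring the length density at $q$, the monotonicity established above (together with the finiteness of the total length, Corollary \ref{tbndlen-c}) shows $\lambda_q$ increases from $\Theta(q) := \lambda_q(0^+)$ to $|\del X|$, so $\Theta(q) \leq |\del X| < \infty$ and, crucially, the rescaled pictures below carry a uniform density bound.

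\textbf{Blow-up at $q$.} I would rescale the net about $q$ by the factor $1/d_i$, i.e. consider $G_i = (G - \vec q)/d_i$. The length density is scale invariant, so each $G_i$ has density $\leq |\del X|$ at every radius; this uniform bound is what lets one extract a subsequential limit $C$ (as length measures / stationary $1$-varifolds). By the monotonicity formula the limit is a stationary \emph{cone} with vertex at the origin, i.e. a finite union of rays from $0$ whose unit directions sum to $0$ (the cone analogue of Lemma \ref{tcondver-l}). This is precisely the Allard--Almgren step and is the part I expect to require the most care in the present elementary setting: one must justify the compactness of the rescalings and that the limit is genuinely a stationary cone rather than merely a limiting measure. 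The uniform density bound from the previous paragraph is exactly the input that makes this compactness available.

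\textbf{The contradiction.} The rescaled vertices $p_i' := (\vec p_i - \vec q)/d_i$ lie on the unit sphere, so a subsequence converges to some $w$ with $\|w\| = 1$. Because density is scale invariant and upper semicontinuous under this convergence, the cone satisfies $\Theta_C(w) \geq \limsup_i \nu(p_i) \geq 3$. But a stationary $1$-dimensional cone is a union of rays through the origin, and two distinct such rays meet only at $0$; hence at any point $w \neq 0$ the cone is locally a single segment, giving $\Theta_C(w) \leq 2$. This contradicts $\Theta_C(w) \geq 3$, so no accumulation point can exist and the vertex set is finite. I note that the explicit formula for $\lambda'$ alone does not suffice here: a single vertex can have all its edges nearly radial from $q$ and so contribute an arbitrarily small amount to the monotonicity increase, which is exactly why the blow-up together with the density classification of the tangent cone is needed rather than a naive per-vertex lower bound.
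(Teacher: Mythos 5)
Your overall strategy --- blow up at an accumulation point, pass to a tangent cone, and compare densities --- is the Allard--Almgren route, which the paper deliberately sidesteps: its own sketch stays elementary, splitting the edges near the accumulation point $p$ into roughly radial ones (controlled because each such edge crossing a small sphere around $p$ contributes close to $1$ to the length density $\lambda(r) \leq |\del X|$) and roughly tangential ones, whose accumulation direction yields a hyperplane cut through $p$ carrying unbounded current, contradicting the bound $c_{in}(v) \leq |\del X|/2$ from \S{}\ref{scurr}. So even if your argument were complete it would be a genuinely different (and much heavier) proof, importing varifold compactness, the monotonicity formula at arbitrary centres, and upper semicontinuity of density, none of which the paper develops beyond the single formula for $\lambda'$.

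More importantly, the final step of your contradiction has a genuine gap. You classify the tangent cone $C$ as a union of rays and conclude $\Theta_C(w) \leq 2$ at any $w \neq 0$ because ``two distinct rays meet only at $0$''. But the varifold limit of a sequence of multiplicity-one nets need not have multiplicity one: two or more nearly parallel edges of $G_i$ can collapse onto a single ray of $C$, which then carries integer multiplicity $m \geq 2$ and density $2m$ at its interior points. Since upper semicontinuity only gives $\Theta_C(w) \geq 3$, a ray of multiplicity $2$ (density $4$) is perfectly consistent with everything you have established, and no contradiction arises. This is precisely the difficulty that Allard and Almgren handle with their discreteness-of-densities hypothesis; to close your argument you would need either to verify that hypothesis for critical nets (not obvious at accumulation points, where the density $\lambda(0^+)$ is only known to exist as a monotone limit and need not be an integer \emph{a priori}) or to exclude higher multiplicity in the blow-up by a separate argument. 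Your side remark that a per-vertex monotonicity increment cannot work, because all edges at a vertex may be nearly radial, is correct and well taken; but the paper's cut-current bound is exactly the device that replaces it, whereas your tangent-cone density comparison, as stated, does not.
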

This lemma is a corollary of Almgren \& Allard's result \cite[Theorem.(5) on p.85]{AA}. However, 
% using currents one can get a finer estimate (which bridges this problem with isoperimetric inequalities) on the cardinality of the interior vertex set.
let us indulge in a sketch of the proof (in $\rr^2$) without making appeal to this result. (The same argument applies for $\rr^k$.)

If there are infinitely many vertices (of degree $>2$), they must accumulate to some point $p$, since $G$ is bounded. 
By looking at the length density around $p$, one sees that only finitely many edges can point (up to some given deviation) radially toward $p$.
If there are infinitely many edges which are (up to some given deviation) tangent to circles around $p$, then these accumulate in some direction (as seen from $p$). 
% : (1) the derivative of the length density is too large and (2) 
Cutting along this direction through $p$ gives an unbounded current (see beginning of \S{}\ref{scurr}). 
And so the conclusion arises by contradiction.
Unfortunately, this (like \cite[Theorem.(5) on p.85]{AA}) is not a quantitative bound. There is still some work to turn this in a quantitative bound (like Question \ref{laques}{\bf .(i)}).

% It is possible to make a $v_1^\perp$ cut near $p$ which cuts an arbitrarily large number of edges. Since the current is finite, almost all of the edges must be in a direction orthogonal to $v_1$. So arbitrarily many edges are in a direction close to the hyperplane $v_1^\perp$. But then there are arbitrarily many edges whose direction is close to $v_2$. This contradicts the finiteness of the current associated to $v_2$. 

% %-----------------------------------------
% \subsection{Splitting}
% %-----------------------------------------
% 
% ?
% 
% %-----------------------------------------
% \subsection{Cardinality by swelling}
% %-----------------------------------------
% 
% ?

%-----------------------------------------
\subsection{Further relations}
%-----------------------------------------

One can also look at a family of dilations with respect to the vector $\vec{e}_s$ (with $s \in \rr$). Using Lemma \ref{tcritnet-l}, this yields
\[
\sum \frac{ ( \xvc{xy} \cdot \vec{e}_s )^2 }{\| \xvc{xy}\|} = \sum_{a \in A} (\hat{\ell}_a \cdot \vec{e}_s ) (\vec{x}_a \cdot \vec{e}_s) 
\]
Differentiating with respect to $s$ and letting $s=0$ gives:
\[
\sum 2 \frac{ ( \xvc{xy} \cdot \vec{e}_0 ) ( \xvc{xy} \cdot \vec{e}'_0 )} {\| \xvc{xy}\|} = \sum_{a \in A} (\hat{\ell}_a \cdot \vec{e}'_0 ) (\vec{x}_a \cdot \vec{e}_0) + (\hat{\ell}_a \cdot \vec{e}_0 ) (\vec{x}_a \cdot \vec{e}'_0) 
\]
Since $\vec{e}'_s$ is arbitrary (and writing $\vec{e}_0 = \vec{e}$), it follows that: for any $\vec{e}$,
\[
\sum 2 ( \xvc{xy} \cdot \vec{e} ) \wh{xy}  
= \sum_{a \in A} 
(\vec{x}_a \cdot \vec{e}) \hat{\ell}_a 
+ (\hat{\ell}_a \cdot \vec{e} ) \vec{x}_a
\]
It's not clear to the authors what is the interpretation of this equality, or how to put it to good use.

Similarly, in the definition of swelling the choice of the origin is arbitrary. 
By differentiating as in Lemma \ref{tzersumdef-l} one gets another new relation.

%-----------------------------------------
\section{Bounds using currents}\label{scurr}
%-----------------------------------------

%-----------------------------------------
\subsection{Currents}
%-----------------------------------------

One of our tools to obtain bounds are currents. These currents can be either defined as a consequence of Lemma \ref{tcritnet-l}, Lemma \ref{tcondver-l} or Lemma \ref{tzersumdef-l}.

{\bfseries Chopping.} 
Let us look at the ``chopping'' deformation (which is essentially a translation). In this deformation one takes some unit vector $v$ and pushes all vertices $x$ with $\vec{x} \cdot \vec{v} > r$ (for some $r \in \rr$) by $\vec{v}$.
Again, it is more convenient to take $r=0$ and translate the graph.
Note that because of the continuity assumption on $d$, this deformation cannot be applied if there is an accumulation point on the hyperplane (which is why it's good to introduce currents through Lemma \ref{tcondver-l} too).

Since $d(x)$ is either $\vec{v}$ or $\vec{0}$, Lemma \ref{tcritnet-l} shows that, 
\[
\sum_{ \{x,y\} \cap \vec{v}^\perp \neq \emptyset } \wh{xy} \cdot \vec{v} = \sum_{ a \in A} \wh{x_aa} \cdot \vec{v}
\]
where $\wh{xy} = \frac{ \xvc{xy}} {\| \xvc{xy} \|} =  \frac{ \vec{y} - \vec{x}}{\| \vec{y} - \vec{x} \|}$, 
the sum on the left is over all edges which cross the hyperplane $\vec{v}^\perp$ and the sum on the right is over all leaves (on the positive side of the hyperplane). Note that this is identical to applying Lemma \ref{tzersumdef-l} to a restricition of the graph.

It turns out this sum can be very conveniently interpreted as a current.

Let $v \in \rr^n$ be a unit vector. Define a current\footnote{It is a current as it will satisfy Kirchhoff's current law at each interior vertex; it does not, in general, satisfy Kirchhoff's potential law.} on the edges of the critical net as follows: if $(x,y)$ is an edge and $\wh{xy} = \frac{y-x}{\|y-x\|}$ is the associated unit vector, then the current from $x$ to $y$ is $c_v(x,y) := \wh{xy} \cdot v$. The current in the opposite direction has the opposite sign.

It is straightforward to check (using only Lemma \ref{tcondver-l}) that this is indeed a current: for $x$ an interior vertex, then $\sum_{y \in N(x)} \wh{xy} =0$, implies $\sum_{y \in N(x)} c_v(x,y) = \sum_{y \in N(x) } \wh{xy} \cdot v = 0$). As such, for every $v$, there are 3 types of leaves. Indeed, let $x$ be a leaf and $y$ its neighbour, then either\\
\noindent no current enters or leaves at $x$, \ie $\wh{yx} \cdot v =0$;\\
\noindent some current enters at $x$, \ie $\wh{yx} \cdot v > 0$;\\
\noindent some current leaves from $x$ \ie $\wh{yx} \cdot v< 0$.

Recall that the unit vector of the edge at a leaf is called a leaf vector.
The following lemma is already known, but the interest here is to offer a different proof of Lemma \ref{tzersumdef-l} (based on currents).
\begin{lem}\label{tzersum-l}
If $G$ is a (bounded) critical net in $\rr^k$, then the sum of the leaf vectors is $0$.
\end{lem}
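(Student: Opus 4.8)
The plan is to deduce the vanishing of the sum of leaf vectors from the conservation of the current $c_v$, exploiting that $c_v$ has no sources or sinks at interior vertices. Fix a unit vector $v \in \rr^k$ and recall the current $c_v(x,y) = \wh{xy} \cdot v$ on each edge, which by Lemma \ref{tcondver-l} satisfies Kirchhoff's current law $\sum_{y \in N(x)} c_v(x,y) = 0$ at every interior vertex $x$. The only places where current can enter or leave the network are therefore the leaves, and the whole content of the lemma is that the total current exchanged with the exterior is balanced, componentwise in $v$.

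To make this precise I would apply Lemma \ref{tcritnet-l} to the \emph{constant} deformation $\vec d \equiv v$, supported on a domain $D$ chosen to contain the entire graph except for small neighbourhoods of the leaves (so that $\del D$ meets $G$ only on the leaf edges, producing exactly one anchor $a$ per leaf). On the left-hand side every edge $\{x,y\}$ has both endpoints carrying the same value $\vec d(x) = \vec d(y) = v$, so each summand $\wh{xy} \cdot (\vec d(x) - \vec d(y))$ vanishes \emph{identically}; hence the left-hand side is $0$ term by term. What remains on the right-hand side is the finite sum $\sum_{a \in A} \wh{x_a a} \cdot v$ over the anchors.

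It then suffices to identify $\wh{x_a a}$ — the unit vector pointing from the interior vertex $x_a$ toward its anchor $a$ — with the leaf vector $\hat{\ell}_a$ at the corresponding leaf, since the anchor lies arbitrarily close to the leaf along the same edge. Lemma \ref{tcritnet-l} thus reads $\sum_{a \in A} \hat{\ell}_a \cdot v = 0$, that is, $\big( \sum_{a \in A} \hat{\ell}_a \big) \cdot v = 0$. As $v$ was an arbitrary unit vector, the vector $\sum_{a \in A} \hat{\ell}_a$ is orthogonal to every direction of $\rr^k$ and so must be $\vec 0$, which is the claim.

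I expect the main subtlety to be convergence rather than the algebra. If one phrases the argument in the more suggestive ``divergence-free current'' language — summing $\sum_{y \in N(x)} c_v(x,y)$ over \emph{all} vertices and cancelling each edge against itself — the rearrangement is only legitimate when the edge set is finite, since $\sum_{\text{edges}} |\wh{xy} \cdot v|$ need not converge for a graph with infinitely many edges (each term is merely bounded by $1$). Routing the argument through Lemma \ref{tcritnet-l} with a \emph{constant} $\vec d$ sidesteps this entirely: the interior contributions are each literally zero rather than cancelling in pairs, so only the finitely many anchor terms survive and no infinite rearrangement is ever performed.
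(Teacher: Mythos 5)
Your argument is correct, but it is not the proof the paper gives for this lemma. The paper restates Lemma \ref{tzersum-l} in the currents section precisely in order to reprove it \emph{via currents}: since each $c_{v_i}$ satisfies Kirchhoff's current law at every interior vertex, the net current must balance at the leaves, giving $\sum_w w\cdot v_i=0$ for each basis vector $v_i$. Your route instead feeds the constant deformation $\vec d\equiv v$ into Lemma \ref{tcritnet-l}, kills the left-hand side term by term, and reads off $\sum_a \hat\ell_a\cdot v=0$ from the anchor sum; this is exactly the translation-deformation proof that the paper mentions only in passing in the remark following Lemma \ref{tzersumdef-l}. The two computations are close cousins --- Kirchhoff's law is the criticality identity tested against a Dirac function at a vertex, while your identity is the same criticality identity tested against a constant --- but they are packaged differently, and your packaging has a genuine advantage that you correctly identify: the interior contributions vanish identically rather than cancelling in pairs across an a priori infinite edge set, so no rearrangement of a possibly non-absolutely-convergent sum is needed. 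What the paper's version buys instead is the electrical-network interpretation (current entering equals current leaving), which is the engine for the later results (Lemmas \ref{tconvhul-l}, \ref{tcin-l}, \ref{tisop-l}); your proof does not develop that machinery. One small point to keep in mind: for $\vec d\equiv v$ on $X_{int}$ and $\vec d=\vec 0$ on the anchors to be an admissible (continuous) deformation, you are implicitly using the paper's standing assumption that the leaves are isolated from the other vertices, so that no interior vertices accumulate at $\del D$.
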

\begin{proof}
Let $\{v_i\}_{1 \leq i \leq k}$ be an orthonormal basis. Since the current $c_{v_i}$ must enter and leave through the leaves, one has that $\sum_w w \cdot v_i =0$ where the sum runs on the leaf vectors. Since it holds for all $i$, $\sum w =0$.
\end{proof}

The following lemma is also already present in Parsch \cite{P18} (but again, the proof is based on a different principle).
\begin{lem}\label{tconvhul-l}
If $G$ is a (bounded) critical net, then $G$ is contained in the convex hull of its leaves
\end{lem}
\begin{proof}
If not, let $C$ be the convex hull of the vertices of $G$ and $P$ be the convex hull of its leaves. Let $H$ be a hyperplane separating some point $x \in C$ from $P$ and avoiding vertices of $G$. Let $D$ be the intersection of $C$ with the half-space bounded by the hyperplane $H$ and containing $x$. Then the leaf vectors of $G_D$ are all in the same (closed) half plane and at least one of them is in the open half-plane (otherwise $P$ is a line), so their sum may not be $0$.
\end{proof}

The upcoming lemma is identical to Lemma \ref{tthreever-l}, but the proof is based on currents (\emph{via} Lemma \ref{tconvhul-l}).
\begin{lem}
If $G \inj \rr^k$ is a critical net with some interior vertex, then $G$ has at least three leaves.
\end{lem}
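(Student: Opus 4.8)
The plan is to deduce this statement from Lemma \ref{tthreever-l} itself, or more honestly, to re-prove the same conclusion using the convex-hull machinery just established. The statement asserts that a critical net with at least one interior vertex must have at least three leaves. By Lemma \ref{ttwover-l} we already know a critical net has at least two leaves, so the real content is to rule out the case $|\del X| = 2$. I would argue by contradiction: suppose $G$ has exactly two leaves, say with leaf vectors $w_1$ and $w_2$.

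The first step is to invoke Lemma \ref{tzersum-l} (or equivalently Lemma \ref{tzersumdef-l}), which tells us the sum of the leaf vectors vanishes. With only two leaves this forces $w_1 + w_2 = 0$, \ie $w_2 = -w_1$. So both leaf edges point along the same line, in opposite directions. The second step is to apply Lemma \ref{tconvhul-l}: the entire graph $G$ lies in the convex hull of its two leaves. But the convex hull of two points is the segment joining them, a one-dimensional set. Hence $G$ is contained in a line.

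The third step is to conclude that a graph contained in a line cannot have an interior vertex. Indeed, on a line every vertex of the net has all its incident edges collinear; by Lemma \ref{tcondver-l} the unit edge vectors at an interior vertex sum to zero, and collinear unit vectors summing to zero can only occur if they split into two opposite groups, which on a straight embedded graph forces the vertex to have degree exactly two (an edge simply passing through). Such a degree-two vertex is not a genuine interior vertex in our sense (it is a fake vertex), so $G$ has no interior vertices, contradicting the hypothesis. This is essentially the same endgame as in the original proof of Lemma \ref{tthreever-l}, where the dilation argument directly showed all edges are collinear with $\vec{x}_a - \vec{x}_b$.

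The main obstacle, such as it is, lies in the final step: making precise that ``contained in a line with an interior vertex'' is impossible. One must be slightly careful because the graph is only assumed embedded, and one should check that collinearity of all edges together with the balancing condition of Lemma \ref{tcondver-l} genuinely prevents a vertex of degree $\geq 3$. The cleanest way is to observe that if all edges at a vertex are collinear, the unit vectors take only two values $\pm u$; their sum is zero only if the two values occur equally often, and since the embedding is injective the vertex can have at most one edge going in each of the two directions along the line, giving degree at most two. This rules out interior vertices and completes the contradiction.
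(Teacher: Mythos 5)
Your proof is correct and follows essentially the same route as the paper: Lemma \ref{tconvhul-l} forces a two-leaf net to lie on the segment between its leaves, and an embedded graph on a line cannot have a vertex of degree $\geq 3$. The appeal to Lemma \ref{tzersum-l} is superfluous (containment in the convex hull of two points already gives collinearity), but your careful justification of the final step merely spells out what the paper asserts in one line.
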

\begin{proof}
If $G$ has two leaves, then the convex hull is a line, and there cannot be any vertex of degree $\geq 3$ on a line.
\end{proof}

\begin{lem}\label{tfaconv-l}
Let $G$ be a critical net in $\rr^2$. Then the closure of any bounded connected component of $\rr^2 \setminus G$ (\ie a face) is the convex hull of the vertices lying on its boundary, unless the face contains a leaf.
\end{lem}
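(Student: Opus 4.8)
The plan is to prove the contrapositive flavored statement by showing that the boundary of a bounded face must be convex at each of its vertices. Let $F$ be a bounded connected component of $\rr^2 \setminus G$ that contains no leaf, and let $C$ be the convex hull of the vertices lying on $\partial F$. Clearly $\srl{F} \subseteq C$, so the content is the reverse inclusion, equivalently that $\partial F$ is already a convex polygon. First I would argue that $\partial F$ is a closed polygonal curve whose corners are vertices of $G$ and whose edges are edges of $G$; since $F$ is bounded and the edges of $G$ are straight segments, $\partial F$ traverses a finite cyclic sequence of vertices $x_1, x_2, \dots, x_m$ of $G$. It then suffices to show that at each such $x_i$ the interior angle of $F$ is at most $\pi$, because a simple closed polygon all of whose interior angles are $\leq \pi$ bounds a convex region.

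The heart of the argument is a local computation at a vertex $x$ on $\partial F$ using Lemma \ref{tcondver-l}. By hypothesis $x$ is not a leaf, so $\sum_{y \in N(x)} \wh{xy} = 0$. Consider the two edges of $\partial F$ meeting at $x$, call their outgoing unit vectors $\wh{xy}$ and $\wh{xz}$; these bound the interior angle of $F$ at $x$ on one side. The remaining neighbours of $x$ all lie outside $\srl{F}$ (they point into other faces), so all the other unit vectors $\wh{xw}$ lie in the closed cone on the \emph{non-}$F$ side of the angle $\angle yxz$. Suppose toward a contradiction that the interior angle of $F$ at $x$ is reflex, \ie strictly greater than $\pi$. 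Then the two boundary edges $\wh{xy}, \wh{xz}$ both lie in an open half-plane through $x$, and the $F$-side cone they span is more than a half-plane; correspondingly the complementary cone containing all other neighbours is strictly less than a half-plane (an open cone of angle $< \pi$). But a finite set of unit vectors summing to $\vec{0}$ cannot all lie in an open half-cone of angle $\leq \pi$ unless it is empty: pick the direction $\vec{e}$ bisecting that cone, then every $\wh{xw} \cdot \vec{e} > 0$ for the offending edges while $\wh{xy}\cdot\vec{e}$ and $\wh{xz}\cdot\vec{e}$ are also controllable, and one checks that $\big(\sum_{w \in N(x)} \wh{xw}\big)\cdot \vec{e} \neq 0$, contradicting the vertex condition. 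This forces the interior angle at $x$ to be at most $\pi$.

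The main obstacle I anticipate is making the dichotomy ``which neighbours lie inside versus outside the face'' fully rigorous, together with the degenerate cases. Two subtleties need care: first, an edge of $G$ incident to $x$ might run along $\partial F$ rather than strictly into another face, and one must ensure the two designated boundary edges are genuinely the extreme ones; second, the angle at $x$ could equal exactly $\pi$ (three collinear configuration), which is permitted and corresponds to a fake vertex of degree $2$ lying on a straight part of $\partial F$ — this does not violate convexity and should be allowed in the statement. I would handle these by choosing $\vec{e}$ to be an outward normal to the $F$-side at $x$ and phrasing the half-plane argument as: if the angle were reflex, the outgoing vectors of the two boundary edges both have strictly positive inner product with $\vec{e}$, and since $F$ lies locally on the side where $\vec{e}$ points inward, every other neighbour $w$ also satisfies $\wh{xw}\cdot \vec{e} \geq 0$ with at least the two boundary terms strict, contradicting $\sum \wh{xw} = 0$.

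Finally, from ``every interior angle of the simple closed polygon $\partial F$ is at most $\pi$'' I would conclude that $\srl{F}$ is convex, hence equal to $C$, the convex hull of its boundary vertices. The ``unless the face contains a leaf'' clause is exactly what rescues the argument: if a leaf lies on $\partial F$, Lemma \ref{tcondver-l} does not apply at that vertex (the vertex condition fails at leaves), so the reflex-angle contradiction cannot be run there, and indeed packings with leaves can produce non-convex faces. I expect the write-up to be short once the half-plane lemma for unit vectors summing to zero is isolated as the key step.
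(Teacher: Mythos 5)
Your argument is correct, but it takes a genuinely different route from the paper. You work purely locally: at each non-leaf vertex $x$ of $\del F$, no edge may enter the open face, so a reflex interior angle $\theta>\pi$ would confine \emph{all} outgoing unit vectors $\wh{xy}$ to the closed complementary cone of opening $2\pi-\theta<\pi$; dotting with the bisector $\vec{e}$ of that cone gives $\sum_{y\in N(x)}\wh{xy}\cdot\vec{e}\geq \nu(x)\cos\big(\pi-\tfrac{\theta}{2}\big)>0$, contradicting Lemma \ref{tcondver-l}, so every interior angle is at most $\pi$ and the closed face is convex, hence equal to the hull of its vertices. The paper instead restricts $G$ to a shrinking neighbourhood $D$ of the face and invokes the global convex-hull lemma (Lemma \ref{tconvhul-l}) on $G_D$, whose leaves are the anchors converging to the cycle vertices. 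Your route needs only the vertex balance condition --- no currents, no restriction construction --- and it makes transparent exactly where the leaf hypothesis enters (the balance condition fails at a leaf); the paper's route is terser and reuses machinery it has already built for the whole-graph statement. Two caveats, which apply equally to the paper's own one-line proof: (i) you should justify that $\del F$ is a \emph{simple} closed polygon when the face contains no leaf --- a boundary walk traversing a bridge twice would break your ``two boundary edges at $x$'' dichotomy, but such a bridge would hang a leafless subgraph into the face, and restricting $G$ to a neighbourhood of that subgraph yields a critical net with a single leaf, which is impossible by Lemma \ref{ttwover-l}; (ii) Lemma \ref{tcondver-l} is stated for isolated vertices, so you are implicitly assuming the cycle vertices are not accumulation points of $X$.
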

\begin{proof}
Take $D$ to be a smaller and smaller neighbourhood of the face. Each vertex of the cycle must be attached to a leaf in $G_D$. Since $G_D$ is in the convex hull of its leaves (which tend to the vertices of the cycle), the conclusion follows.
\end{proof}

Many other nice properties can be shown using currents, see \S{}\ref{scurr}.

%-----------------------------------------
\subsection{Bounding valency}
%-----------------------------------------

The total entering current for the current $c_v$ (associated to $v$), noted $c_{in}(v)$, can be defined by $c_{in}(v) = \tfrac{1}{2} \sum_{x \in \del X, y \in N(x)} |c(x,y)|$ (recall that $N(x)$ contains only one element when $x \in \del X$). Denote the set of edges where currents leaves/enters the graph by $E^\pm_\del = \{ (x,y) \mid x \in \del X \text{ and } \pm c_v(y,x) >0 \text{ for } y \in N(x) \}$. One has the fairly intuitive description of $c_{in}$:
\begin{lem}\label{tcin-l}
$c_{in}(v) = \sum_{(x,y) \in E^+_\del} c_v(y,x) = \sum_{(x,y) \in E^-_\del} -c_v(y,x)$.
\end{lem}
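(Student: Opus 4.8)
The plan is to reduce everything to the conservation of current through the leaves, which has already been established in the form ``the sum of the leaf vectors is $0$'' (Lemma \ref{tzersum-l}). First I would pin down the orientation conventions. For a leaf $x$ with its unique neighbour $y \in N(x)$, the quantity $c_v(y,x) = \wh{yx} \cdot v$ measures the current flowing from the interior toward the leaf; the three types of leaves identified above correspond exactly to $c_v(y,x) > 0$ (so $(x,y) \in E^+_\del$), $c_v(y,x) < 0$ (so $(x,y) \in E^-_\del$), and $c_v(y,x) = 0$. Note that $|c(x,y)| = |c_v(y,x)|$ is independent of the chosen orientation of the edge.

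Next I would prove the balance identity $\sum_{(x,y) \in E^+_\del} c_v(y,x) = \sum_{(x,y) \in E^-_\del} \bigl(-c_v(y,x)\bigr)$. The key observation is that the leaf vector at $x$ is precisely $\wh{yx}$, so Lemma \ref{tzersum-l}, which asserts $\sum_w w = 0$ over all leaf vectors $w$, gives upon dotting with $v$ that $\sum_{x \in \del X} c_v(y,x) = 0$, where for each leaf $x$ we write $y$ for its neighbour. Splitting this sum according to the sign of $c_v(y,x)$ and discarding the leaves carrying no current yields $\sum_{E^+_\del} c_v(y,x) + \sum_{E^-_\del} c_v(y,x) = 0$, which is the desired balance after moving the negative terms to the other side.

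Finally I would unwind the definition of $c_{in}(v)$. Since every leaf edge lies in exactly one of $E^+_\del$, $E^-_\del$, or carries zero current, and since $|c(x,y)| = c_v(y,x)$ on $E^+_\del$ while $|c(x,y)| = -c_v(y,x)$ on $E^-_\del$, the defining sum satisfies $\sum_{x \in \del X,\, y \in N(x)} |c(x,y)| = \sum_{E^+_\del} c_v(y,x) + \sum_{E^-_\del}\bigl(-c_v(y,x)\bigr)$. By the balance identity these two terms are equal, so $c_{in}(v) = \tfrac{1}{2}\sum |c(x,y)|$ equals each of them, proving both claimed equalities.

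The computation is essentially routine; the only care needed is the sign bookkeeping (tracking which orientation of each leaf edge is being summed, and that $E^+_\del$ and $E^-_\del$ together with the zero-current leaves exhaust the leaf edges). The one genuinely useful point is that invoking the already-proven Lemma \ref{tzersum-l} sidesteps any convergence difficulty: a direct vertex-by-vertex summation of Kirchhoff's law (Lemma \ref{tcondver-l}) would in principle require rearranging an infinite sum when there are infinitely many interior vertices, whereas the leaf-vector identity is a finite statement about $|\del X|$ terms.
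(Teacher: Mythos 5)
Your proposal is correct and follows essentially the same route as the paper's own proof: split the defining sum $\tfrac{1}{2}\sum_{x \in \del X,\, y \in N(x)} |c(x,y)|$ according to the sign of $c_v(y,x)$ on $E^+_\del$ and $E^-_\del$, then invoke Lemma \ref{tzersum-l} (dotted with $v$) to obtain the balance $\sum_{E^+_\del} c_v(y,x) + \sum_{E^-_\del} c_v(y,x) = 0$, from which both equalities follow. The only difference is cosmetic ordering (you establish the balance identity before unwinding the definition, the paper does the reverse), so there is nothing further to add.
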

\begin{proof}
Note that
\[
c_{in}(v) := \tfrac{1}{2} \sum_{x \in \del X, y \in N(x)} |c(x,y)| = \tfrac{1}{2} \sum_{(x,y) \in E^+_\del} c_v(y,x) + \tfrac{1}{2} \sum_{(x,y) \in E^-_\del} -c_v(y,x) .
\]
But by Lemma \ref{tzersum-l}, $\sum_{(x,y) \in E^+_\del} c_v(y,x) + \sum_{(x,y) \in E^-_\del} c_v(y,x) =0$. The conclusion follows. 
\end{proof}
This can be interpreted as follows: at each leaf, either some current enters or leaves the graph, and since there is as much current entering as leaving, the above sum indeed gives the quantity of current going through the graph. 
\begin{lem}
Let $G$ be a bounded critical net in $\rr^k$ and $\del X$ the set of leaves. Then $c_{in}(v) \leq |\del X| /2$. 
\end{lem}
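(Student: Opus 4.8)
The plan is to bound $c_{in}(v)$ directly from its definition, with the only essential input being that a leaf has a single neighbour. Since each $x \in \del X$ has exactly one $y \in N(x)$, the defining sum $c_{in}(v) = \tfrac12 \sum_{x \in \del X, y \in N(x)} |c(x,y)|$ consists of precisely $|\del X|$ terms, one per leaf. So the whole estimate reduces to controlling each individual term.

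Next I would estimate each summand. Each has the form $|c(x,y)| = |\wh{xy} \cdot v|$, a dot product of two unit vectors, so by Cauchy--Schwarz $|\wh{xy} \cdot v| \leq \|\wh{xy}\|\,\|v\| = 1$. Summing over the $|\del X|$ leaves gives $\sum_{x \in \del X, y \in N(x)} |c(x,y)| \leq |\del X|$, and multiplying by $\tfrac12$ yields the claimed inequality $c_{in}(v) \leq |\del X|/2$.

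The point worth noting is that the factor $\tfrac12$ in the definition is exactly what produces the clean constant: the two equivalent descriptions of $c_{in}(v)$ furnished by Lemma \ref{tcin-l} (as the total current entering, respectively leaving) each run over only a subset of the leaves, and the symmetric definition with the $\tfrac12$ averages them. One could equally route the argument through Lemma \ref{tcin-l}, bounding $\sum_{(x,y) \in E^+_\del} c_v(y,x)$ by the number of leaves appearing in $E^+_\del$; but the direct estimate above is cleaner. There is essentially no obstacle here: the bound is purely geometric (unit vectors have dot product at most $1$) and combinatorial (one neighbour per leaf), and it invokes criticality only insofar as Lemma \ref{tcondver-l} guarantees that $c_v$ really is a current, so that the quantity $c_{in}(v)$ is well defined.
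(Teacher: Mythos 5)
Your proof is correct and follows essentially the same route as the paper, which simply observes that the maximal current through any single leaf edge is $1$ (the dot product of two unit vectors) and that there are $|\del X|$ such edges, so the factor $\tfrac12$ in the definition gives $c_{in}(v) \leq |\del X|/2$. Your version just spells out the details the paper leaves implicit.
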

\begin{proof}
The maximal current entering through an edge is $1$. Hence $c_{in}(v) \leq |\del X| /2$.
\end{proof}
The inequality is sharp (take a line with only two leaves). Except for this case (and disjoint unions of such), it is probably strict. Constructing graphs which approach this bound as nearly as desired is easy (look at nearly parallel lines).

For some non-zero $w \in \rr^k$, a $w^\perp$-cut of the net $G$ is a hyperplane parallel to $w^\perp$ which does not intersect the vertices of $G$. These cuts are useful in the following context: look at the current associated to $v$ and make a $v^\perp$-cut, \ie a hyperplane of the form $\lambda v + v^\perp$. Since it is assumed that $G$ has countably many vertices, there are (uncountably) many $v^\perp$-cuts. The current through the cut $\lambda v + v^\perp$ is defined as the sum of the currents on the edges which are positively colinear to $v$, \ie if $H_- = \cup_{\lambda'< \lambda} (\lambda' v+ v^\perp)$ and $H_+$ is the other half-space then
\[
c(\lambda v + v^\perp) := \sum_{x \in H_-, y \in H_+, x \in N(y)} c(x,y).
\]
Also, as $\lambda$ varies, this current only changes when the hyperplane passes through a leaf. Indeed, the current law implies the current through the cut does not change at an interior vertex. 
\begin{lem}
Let $G$ be a bounded critical net in $\rr^k$ with finitely many leaves. The current through a $v^\perp$-cut is, up to a sign, the sum (with sign) of the current entering or exiting in (either) one of the half-spaces determined by the $v^\perp$-cut. In particular, it is always less than $c_{in}(v) \leq |\del X|/2$
\end{lem}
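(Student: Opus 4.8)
The plan is to read the identity off from conservation of the current, i.e.\ from the discrete divergence theorem applied to the half-space $H_+$. Let $V_+$ denote the set of \emph{all} vertices (leaves and interior) lying in $H_+$, and consider the sum of $\sum_{y \in N(x)} c_v(x,y)$ over all $x \in V_+$. First I would observe that every edge with both endpoints in $V_+$ contributes $c_v(x,y) + c_v(y,x) = 0$ by antisymmetry of the current, so all such edges cancel; what survives is precisely the sum over the crossing edges, those with one endpoint in $V_+$ and the other in $H_-$. In symbols,
\[
\sum_{x \in V_+} \sum_{y \in N(x)} c_v(x,y) = \sum_{\substack{x \in V_+,\, y \in H_- \\ x \in N(y)}} c_v(x,y) = - c(\lambda v + v^\perp),
\]
the last equality being the definition of the cut current (the flow from $H_-$ to $H_+$) combined with antisymmetry of $c_v$.

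Next I would evaluate the left-hand side vertex by vertex. By Lemma \ref{tcondver-l} every interior vertex contributes $0$, so only the leaves in $H_+$ remain; each leaf $x$ with neighbour $y_x$ contributes $c_v(x,y_x) = -c_v(y_x,x)$. Combining this with the display above gives
\[
c(\lambda v + v^\perp) = \sum_{\text{leaves } x \in H_+} c_v(y_x, x),
\]
which is exactly the signed total of the current entering or leaving at the leaves contained in $H_+$. Lemma \ref{tzersum-l} (the leaf vectors sum to $0$, hence the leaf currents sum to $0$) then shows this equals minus the analogous sum over the leaves in $H_-$, which is the ``up to a sign, in either one of the half-spaces'' part of the statement.

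For the bound I would split the leaves in $H_+$ into those where current enters and those where it leaves (the sets $E^+_\del$ and $E^-_\del$ of Lemma \ref{tcin-l} intersected with $H_+$). The positive part of the sum is at most the total entering current and the magnitude of the negative part is at most the total leaving current; by Lemma \ref{tcin-l} both of these equal $c_{in}(v)$. Hence the signed sum lies in $[-c_{in}(v), c_{in}(v)]$, giving $|c(\lambda v + v^\perp)| \leq c_{in}(v) \leq |\del X|/2$.

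The only genuine subtlety, and the step I would watch most carefully, is the bookkeeping for leaf edges that themselves cross the cut, together with the attendant sign conventions (which orientation counts as ``entering''). Summing the divergence over \emph{all} vertices of $H_+$, rather than over the interior ones alone, is what makes such crossing leaf edges fall automatically onto the correct side of the ledger; everything else is antisymmetry and the two earlier lemmas.
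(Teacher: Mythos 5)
Your proof is correct and follows essentially the same route as the paper: the paper's one-line proof (``look at the graph restricted to $D$ a half-space delimited by the $v^\perp$-cut'') amounts to applying Lemma \ref{tzersum-l} to the restriction $G_D$, whose leaves are the original leaves in $D$ together with the anchors at the crossing points, and your discrete divergence computation over the vertices of $H_+$ (conservation at interior vertices via Lemma \ref{tcondver-l}, plus the global zero-sum of leaf currents) is exactly that argument written out in full. Your explicit treatment of leaf edges that themselves cross the cut, and the resulting bound via Lemma \ref{tcin-l}, supplies detail the paper leaves implicit but introduces no new idea.
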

\begin{proof}
Look at the graph restricted to $D$ a half-plane delimited by the $v^\perp$-cut. 
\end{proof}

Let us give an example of a nice consequence (though Lemma \ref{tlemdeg-l} already gives a better estimate). For this, recall that
\[
\kappa_{k}(n) = \max \bigg\{ \sum_{i=1}^n \|w_i\|_{\ell^1} \; \bigg| \; w_i \in \rr^k, \; \sum_{i=1}^n w_i = 0 \text{ and } \|w_i\|_{\ell^2}=1 \bigg\} \leq n \sqrt{k},
\]
since $\|w\|_{\ell^1} \leq \sqrt{k}\|w\|_{\ell^2}$. This is sharp for $n$ even.
\begin{lem}\label{tborndeg-l}
If $G$ is a bounded critical net in $\rr^k$, then the valency of any vertex is $\leq \sqrt{k} |\del X|$ where $|\del X|$ is the number of leaves.
\end{lem}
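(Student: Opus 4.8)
The plan is to bound the valency of a vertex $x$ by the sum of the $\ell^1$-norms of its outgoing unit vectors, to rewrite this sum coordinatewise in terms of the currents $c_{v_i}$ for an orthonormal basis, and finally to pass from these local (vertex) currents to the global quantities $c_{in}(v_i)$, which the zero-sum of the leaf vectors (Lemma \ref{tzersum-l}) together with the bound $\kappa_k(|\del X|)\le \sqrt{k}\,|\del X|$ will control.

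First I would fix an orthonormal basis $\{v_i\}_{i=1}^k$ and an isolated (non-leaf) vertex $x$. Since each $\wh{xy}$ is a unit vector, $\nu(x)=\sum_{y\in N(x)}\|\wh{xy}\|_{\ell^2}\le \sum_{y\in N(x)}\|\wh{xy}\|_{\ell^1}=\sum_{i=1}^k\sum_{y\in N(x)}|\wh{xy}\cdot v_i|$. Writing $c_x(v):=\tfrac12\sum_{y\in N(x)}|\wh{xy}\cdot v|$ for the current \emph{through} $x$ in direction $v$, this reads $\nu(x)\le 2\sum_{i=1}^k c_x(v_i)$. Note that, by Lemma \ref{tcondver-l}, $\sum_{y\in N(x)}\wh{xy}\cdot v=0$, so the sum of the positive contributions $\sum_{\wh{xy}\cdot v>0}\wh{xy}\cdot v$ equals the sum of the negative ones, each being exactly $c_x(v)$.

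Next I would show $c_x(v)\le c_{in}(v)$ for every direction $v$. Place a $v^\perp$-cut just on the positive-$v$ side of $x$, i.e.\ the hyperplane $(\vec{x}\cdot v+\eps)v+v^\perp$ with $\eps>0$ small enough to avoid all (countably many) vertices; this is possible because $x$ is isolated. By definition the current through this cut is a sum of strictly positive contributions $\wh{x'y'}\cdot v$ over the edges crossing it. Among these edges are exactly those edges of $x$ with $\wh{xy}\cdot v>0$, and their contributions add up to $c_x(v)$ by the previous paragraph. Discarding the remaining non-negative terms and invoking the preceding lemma (the current through any $v^\perp$-cut is at most $c_{in}(v)$) yields $c_x(v)\le c_{in}(v)$. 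If $x$ happened to have infinitely many edges, one lets $\eps\to 0^+$: the partial sums over upward edges crossing the cut increase to $c_x(v)$ while staying $\le c_{in}(v)$, so the inequality persists.

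Finally I would apply the $\kappa_k$-bound to the leaves. The leaf vectors $\{w\}$ form a collection of $|\del X|$ unit vectors with $\sum_w w=0$ by Lemma \ref{tzersum-l}, so by the very definition of $\kappa_k$ we get $\sum_w\|w\|_{\ell^1}\le \kappa_k(|\del X|)\le \sqrt{k}\,|\del X|$. Since $c_{in}(v_i)=\tfrac12\sum_w|w\cdot v_i|$ (Lemma \ref{tcin-l}), summing over $i$ gives $\sum_{i=1}^k c_{in}(v_i)=\tfrac12\sum_w\|w\|_{\ell^1}\le \tfrac{\sqrt{k}}{2}|\del X|$. Combining the three steps, $\nu(x)\le 2\sum_{i=1}^k c_x(v_i)\le 2\sum_{i=1}^k c_{in}(v_i)\le \sqrt{k}\,|\del X|$, as claimed. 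The main obstacle is the middle step: correctly localizing the current at $x$ and recognizing it as a sub-sum of the (all-positive) current through a nearby cut, so that the global bound $c_x(v)\le c_{in}(v)$ applies; the $\sqrt{k}$ (rather than $k$) saving comes precisely from measuring the leaf vectors' $\ell^1$-length jointly via $\kappa_k$ instead of bounding each coordinate current by $|\del X|/2$ separately.
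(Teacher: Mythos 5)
Your proof is correct and follows essentially the same route as the paper's: bound $\nu(x)$ by the sum of the $\ell^1$-norms of the incident unit vectors, control each coordinate contribution by the current through a $v_i^\perp$-cut adjacent to $x$ (hence by $c_{in}(v_i)$), and finish with the $\kappa_k(|\del X|) \leq \sqrt{k}\,|\del X|$ bound on the leaf vectors. Your write-up merely makes explicit the steps the paper leaves terse (the balance of positive and negative contributions at $x$ via Lemma \ref{tcondver-l}, and the identification $\sum_i c_{in}(v_i) = \tfrac12 \sum_w \|w\|_{\ell^1}$ via Lemma \ref{tcin-l}).
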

\begin{proof}
Consider a vertex $x$, then the sum of the $\ell^2$-norm of the unit vectors of the edges incident at $x$ is the valency $\nu(x)$ of $x$ (trivially). 

On the other hand, take $\{v_i\}_{i = 1, \ldots k}$ to be an orthonormal basis. For $i=1,\ldots, k$, the current $c_{v_i}$ along a $v_i^\perp$ cut before and after $x$ is at most $c_{in}(v_i)$. So the sum of the $\ell^1$-norm of those vectors is less than $\kappa_k(|\del X|)$. 

But $\|w\|_{\ell^2} \leq \|w\|_{\ell^1}$, so that the degree is less than $\kappa_k(|\del X|)$.
\end{proof}
Though the estimate of $\kappa_k(n)$ is sharp for $n$ even, the above bound on the valency is probably way off: the right bound is probably $|\del X|$.

When the set of vertices is discrete, it is possible to define the current $c_v$ through a vertex $x$ as the current $c_v$ through some hyperplane (perpendicular to $v$) close enough to $x$. Note that the proof of Lemma \ref{tborndeg-l} shows that the sum of the various currents through a vertex $x$ for an orthonormal basis $\{v_i\}$, (\ie $\sum_i c_{v_i}(x)$) is $\geq \nu(x) /2$ where $\nu(x)$ is the valency of $x$. In fact, since in $\rr^k$, $\|w\|_{\ell^1} \leq \sqrt{k} \|w\|_{\ell^2}$, one sees that the sum of currents through a vertex is between $\nu(x)/2$ and $\sqrt{k} \nu(x)/2$. 

%-----------------------------------------
\subsection{Bounding total length}
%-----------------------------------------

The bound on the total length is the one that comes most naturally from the currents.
\begin{lem}\label{tisop-l}
Let $G$ be a critical net in $\rr^k$ with finitely many leaves. Let $L$ be the total length of all edges (except the leaf edges), let $\ell_v$ be the size of the projection of the graph on an axis parallel to the vector $v$. Then for any orthonormal basis $\{v_i\}_{i=1,k}$,  
\[
\frac{2 L}{ \big( \sum_{i=1}^k \ell_{v_i}^2  \big)^{1/2} } \leq |\del X| % \quad \text{ and } \quad \frac{ \sqrt{2} L}{\max(\ell_{v_1},\ell_{v_2} ) } \leq |\del X|.
\]
\end{lem}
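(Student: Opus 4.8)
The plan is to integrate the ``cut current'' over all parallel hyperplane cuts in each of the $k$ coordinate directions, recognize the resulting total as $L$, and then bound each one-dimensional integral by (width of projection)$\times$(maximal cut current). The only trick needed to reach the sharp $\ell^2$-combination, rather than an $\ell^1$ one with an extra $\sqrt k$, is to keep the maximal cut current written in terms of the leaf vectors and postpone Cauchy--Schwarz to the very end. As a preliminary step I would discard all leaf edges, so that the remaining graph $\tilde G$ has total length exactly $L$ and projection of width $\ell_{v_i}$ onto the $v_i$-axis; at each former neighbour of a leaf the leaf vector $w$ now records the current entering or leaving, the interior vertices still obey Kirchhoff's law (Lemma \ref{tcondver-l}), and $\sum_w w = 0$ by Lemma \ref{tzersum-l}.

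First I would set up the integral identity. Fix the orthonormal basis $\{v_i\}$ and, for each $i$, slide the $v_i^\perp$-cut across $\tilde G$, writing $c_i(\lambda)$ for the (nonnegative, piecewise-constant) current through the cut at height $\lambda$. An edge $\{x,y\}$, oriented so that $\wh{xy}\cdot v_i>0$, contributes $\wh{xy}\cdot v_i$ to $c_i(\lambda)$ exactly for $\lambda$ ranging over an interval of length $(\vec y-\vec x)\cdot v_i=(\wh{xy}\cdot v_i)\,\|\vec x-\vec y\|$, so
\[
\int_{\rr} c_i(\lambda)\,\dd\lambda=\sum_{\{x,y\}}(\wh{xy}\cdot v_i)^2\,\|\vec x-\vec y\|,
\]
the sum running over the edges of $\tilde G$. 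Summing over $i$ and using $\sum_i(\wh{xy}\cdot v_i)^2=\|\wh{xy}\|^2=1$ collapses this to $L=\sum_i\int_\rr c_i(\lambda)\,\dd\lambda$.

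Next I would bound each integral. Since $c_i(\lambda)$ is supported on the $v_i$-projection of $\tilde G$, an interval of length $\ell_{v_i}$, and $c_i(\lambda)\le c_{in}(v_i)$ for every $\lambda$, one gets $\int_\rr c_i(\lambda)\,\dd\lambda\le \ell_{v_i}\,c_{in}(v_i)$. Crucially, I would resist substituting $c_{in}(v_i)\le|\del X|/2$ here, and instead keep $c_{in}(v_i)=\tfrac12\sum_{w}|w\cdot v_i|$ (the sum over the leaf vectors $w$, by the definition of $c_{in}$), giving
\[
L\le\sum_i\ell_{v_i}\,c_{in}(v_i)=\tfrac12\sum_{w}\sum_i\ell_{v_i}\,|w\cdot v_i|.
\]
Finally, for each fixed leaf vector $w$, Cauchy--Schwarz together with $\sum_i(w\cdot v_i)^2=\|w\|^2=1$ yields $\sum_i\ell_{v_i}|w\cdot v_i|\le(\sum_i\ell_{v_i}^2)^{1/2}$; as there are $|\del X|$ leaf vectors, this gives $L\le\tfrac12|\del X|(\sum_i\ell_{v_i}^2)^{1/2}$, which is the claim.

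The only genuine obstacle is the step just flagged: one must arrange the estimate so that it is the \emph{unit length of each leaf vector}, and not the ambient dimension $k$, that controls the spread of $w$ over the $k$ directions. Plugging in the crude cut-current bound $|\del X|/2$ too early forces a later Cauchy--Schwarz over the index $i$ with no normalization, costing a factor $\sqrt k$; swapping the order of summation so that Cauchy--Schwarz is applied per leaf, where $\sum_i(w\cdot v_i)^2$ is exactly $1$, is what makes the bound dimension-free. Once the identity $L=\sum_i\int c_i\,\dd\lambda$ and this per-leaf application are in place, everything else is routine bookkeeping.
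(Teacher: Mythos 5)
Your argument is correct, and it reaches the key intermediate inequality $\sum_{e}\|\vec x-\vec y\|\,(\wh{xy}\cdot v)^2\le \ell_v\,c_{in}(v)$ by a different route than the paper. The paper obtains this inequality for $k=2$ by realising the current $c_v$ (with resistances $r_e=\|\vec x-\vec y\|$ and potential $p(y)=\vec y\cdot v$) as a packing of rectangles of total area at most $\ell_v\,c_{in}(v)$, and for general $k$ by a summation-by-parts identity $\langle \nabla h\mid c_v\rangle_{E_{int}}=-\sum_{x\in\del X,\,y\in N(x)}h(y)c_v(x,y)$ with $h$ the potential. Your version integrates the cut current $c_i(\lambda)$ over all parallel $v_i^\perp$-cuts (a Fubini/coarea computation) and uses the uniform bound $c_i(\lambda)\le c_{in}(v_i)$ from the cut-current lemma; this is essentially the continuous shadow of the rectangle packing, but it is self-contained, works in every dimension at once, and avoids both the appeal to Bollob\'as and the adjoint-operator formalism. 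One point you should make explicit: the identification of your nonnegative $c_i(\lambda)$ (sum of $|c_{v_i}(e)|$ over crossing edges) with the \emph{net} cut current bounded by $c_{in}(v_i)$ works only because, for the geometric current $c_v(x,y)=\wh{xy}\cdot v$, every edge crosses the hyperplane in the direction of its positive current, so no cancellation is being discarded; this is automatic here but worth a sentence. Your endgame also differs cosmetically from the paper's: you apply Cauchy--Schwarz once per leaf vector $w$, using $\sum_i(w\cdot v_i)^2=1$, whereas the paper applies it to the vector $\big(c_{in}(v_i)\big)_i$ and then bounds $\sum_i c_{in}(v_i)^2\le\tfrac14|\del X|^2$ by a second Cauchy--Schwarz over the leaves; both give the same dimension-free constant, and your observation about where the normalisation $\|w\|=1$ must be spent is exactly the right diagnosis of why a naive ordering loses a factor $\sqrt k$.
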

We make first the proof for $k=2$ which has a nice interpretation in terms of rectangle packings.
\begin{proof}[Proof for $k=2$]
Assign a resistance $r_e$ to each $e$ given by the length of the edge $r_{(x,y)} = \| y-x\|$. This choice of resistance will make Kirchhoff's potential law to hold for any current $c_v$. Indeed, one only needs to check there is a potential function satisfying $p(y) - p(x) = r_{(x,y)} c_v(x,y)$. To do so, put $p(y) = y \cdot v$.

Stem edges can be assumed to be of length (hence resistance) equal to $1$. Realise the corresponding electrical current as a packing of rectangles (see \cite[\S{}II.2]{Boll}). Each rectangle corresponds to an edge. In the convention of \cite[\S{}II.2]{Boll}, its width is the current through the edge and its height the difference of potential at the extremities. In particular, the area of each rectangle is $r_e c_v(e)^2$. 

The total packing (without the leaf edges) has height the projection of the initial graph in the direction $v$ (\ie the biggest difference in potential), denote this by $\ell_v$, and width the total current associated to $v$, $c_{in}(v)$. Hence, one gets
\[
\sum_{e \in E_{int}} r_e c_v(e)^2 \leq c_{in}(v) \ell_v,
\]
where $E_{int}  = E \setminus E_\del$ is the set of edges not connected to leaves and $E_\del$ is the set of leaf edges.
Let $v_i$ be an orthogonal basis for $\rr^2$. Apply the previous inequality to $v_1$, then $v_2$ and sum. Note that $c_{v_1}(\cdot)^2 + c_{v_2}(\cdot)^2 = 1$, to obtain
\[
L = \sum_{e \in E_{int}} r_e \leq c_{in}(v_1) \ell_{v_1} + c_{in}(v_2) \ell_{v_2} \leq \big(  c_{in}(v_1)^2 + c_{in}(v_2)^2 \big)^{1/2} \big( \ell_{v_1}^2 + \ell_{v_2}^2  \big)^{1/2}
\]
where $L$ is the total length. To bound the right-hand side, recall that $2 c_{in}(v) \leq  \sum_{x \in \del X} c_v(x)$. Then, use the Cauchy-Schwarz inequality again:
\[
c_{in}(v)^2 \leq \tfrac{1}{4} \big( \sum_{x \in \del X} c_v(x) \big)^2 \leq \tfrac{1}{4} |\del X| \sum_{x \in \del X} c_v(x)^2. 
\]
Using again $c_{v_1}(\cdot)^2 + c_{v_2}(\cdot)^2 = 1$, one finds
\[
\frac{2 L}{ \big( \ell_{v_1}^2 + \ell_{v_2}^2  \big)^{1/2} } \leq |\del X|.\qedhere
\]
\end{proof}
Going back to $L \leq c_{in}(v_1) \ell_{v_1} + c_{in}(v_2) \ell_{v_2}$, it also possible to bound simply:
\[
\frac{L}{\max(\ell_{v_1},\ell_{v_2} ) } \leq c_{in}(v_1) + c_{in}(v_2) \leq \tfrac{\sqrt{2}}{2} |\del X|. 
\]
However, since $\sqrt{2} \max(\ell_{v_1},\ell_{v_2}) \geq \big( \ell_{v_1}^2 + \ell_{v_2}^2  \big)^{1/2}$, this bound is less interesting than the one in the lemma. They coincide when $\ell_{v_1} = \ell_{v_2}$.

\begin{proof}[Proof for $k \geq 2$]
For a function $f:X \to \rr$, let $\nabla f(x,y) = f(y) - f(x)$ be its gradient (seen as a function on the oriented edges). Then, using $\delta_\cdot$ to denote Dirac functions, 
\[
\pgen{ \nabla^* \delta_{(x,y)} , \delta_z} = \pgen{ \delta_{(x,y)} , \nabla \delta_z} = \left \lbrace \begin{array}{ll}
1 & \textrm{if } z=y \\
-1& \textrm{if } z=x \\
0 & \textrm{else}.
\end{array}
\right. 
\]
So the adjoint $\nabla^*$ of $\nabla$ (call it the divergence) is 
\[
\nabla^* g(x) =  \sum_{y \in N(x)} g(y,x) - \sum_{y \in N(x)} g(x,y) 
\]
Satisfying Kirchoff's current law is exactly being in $\ker \nabla^*$. Hence, for any current $c$ (as a function on the edges) and any $h: X \to \rr$ such that $h_{| \del X} \equiv 0$, 
\[
0 = \langle h \mid \nabla^*c \rangle = \langle \nabla h \mid c \rangle_{E_{int}} + \sum_{x \in \del X, y \in N(x)} h(y) c_v(x,y).
\]
Pick $h$ to be the potential from the previous proof and $c= c_v$. 
\[
\sum_{(x,y) \in E_{int}} \|y-x\| (\wh{yx}\cdot v)^2 = \sum_{x \in \del X, y \in N(x)} h(y) c_v(y,x).
\]
Split this last sum in two by considering $E^\pm_\del = \{ (x,y) \mid x \in \del X \text{ and } \pm c_v(y,x) >0 \text{ for } y \in N(x) \}$. Then
\[
\begin{array}{rl}
\displaystyle \sum_{x \in \del X, y \in N(x)} h(y) c_v(y,x) 
&= \displaystyle \sum_{(x,y) \in E^+} h(y) c_v(y,x) - \sum_{(x,y) \in E^-} h(y) c_v(x,y) \\
& \leq \displaystyle \maxx{(x,y) \in E^+_\del } h(y)  \sum_{(x,y) \in E^+} c_v(y,x) -  \minn{(x,y) \in E^-_\del} h(y) \sum_{(x,y) \in E^-} c_v(x,y)\\
& \leq \displaystyle \Big( \maxx{(x,y) \in E^+_\del } h(y)  -  \minn{(x,y) \in E^-_\del} h(y) \Big) c_{in}(v)\\
& = \ell_v c_{in}(v).
\end{array}
\]
The rest of the proof follows by the exact same computations as before.
\end{proof}

% Let us give examples to show that the constant is not too terrible (at least in small dimension) in the previous lemma.

% \begin{exa}
% If one looks at the critical net with $6$ leaves and the inner vertices form a regular hexagon (say with unit side), one gets $L = 6$ and $\big(\ell_{v_1}^2+\ell_{v_2}^2\big)^{1/2} = \sqrt{7}$, so that the inequality reads $1 \leq \sqrt{7}/2 \simeq 1.322875656\ldots$.
% 
% By looking at a similar graph obtained by looking at a packing of regular hexagons (again with unit side) arranged to be as round as possible, one get (for a large radius) an asymptotic of $1 \leq \sqrt{7} / \sqrt{3} \simeq 1.5275252\ldots$. \hfill $\Diamond$
% \end{exa}
% 
% \begin{exa}
% Another relatively simple test is to look at packing of (unit) hypercubes inside a larger cube. If the side of the larger cube is $n$ and the dimension $d$, one has that $L = d c^d$,
% $|\del X| = 2d c^{d-1}$ and $\sum \ell_{v_i}^2 = d c^2$, and the inequality reads $1 \leq \sqrt{d}$. \hfill $\Diamond$
% \end{exa}

% ------------
% 
% Note, if $f,h:X \to \rr$ are $0$ on $\del X$ then
% \[
% \langle h \mid  \Delta f \rangle - \langle f \mid \Delta h \rangle = \langle h \mid \nabla f \rangle_{\del X} - \langle f \mid \nabla h \rangle_{\del X}
% \]
% since, if $h$ is $0$ on $\del X$,
% \[
% \langle h \mid  \Delta f \rangle - \langle \nabla h \mid \nabla f \rangle = \langle h \mid \nabla f \rangle_{\del X}
% \]
% 
% -----------

%-----------------------------------------
\subsection{Bounds on cardinality}
%-----------------------------------------

The next lemma is a combinatorial version of Lemma \ref{tisop-l}. To do so introduce the combinatorial length in a direction $v$, $D_v$, as follows. First, one requires that there are no edges $e$ such that $c_v(e) =0$ (this excludes finitely many $v$). Let $G'$ be the ``pruned'' critical net (\ie without the leaves). The edges of $G'$ by are given an orientation $\jo{O}_v$ in which, of the two oriented edges $(x,y)$ and $(y,x)$, only the one where $c_v$ is $> 0$ is kept. Then $D_v$ is the length of the longest oriented path in $G'$.
\begin{lem}\label{tiscomb-l}
Let $G$ be a bounded critical net in $\rr^k$ with finitely many leaves. Then 
\[
\nu(X_{int}) \leq 2|\del X| + \Big( \sum_{i=1}^k D_{v_i}^2 \Big)^{1/2} |\del X|.
\]
where $\{v_i\}_{i=1}^k$ is an orthonormal basis of $\rr^k$.
\end{lem}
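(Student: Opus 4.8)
The plan is to run the proof of Lemma \ref{tisop-l} in a purely combinatorial key: replace each edge-resistance (its length) by $1$, and replace the metric potential $h(\vec{y}) = \vec{y}\cdot v$ by a combinatorial potential built from the orientation $\jo{O}_v$. Fix once and for all a generic orthonormal basis $\{v_i\}_{i=1}^k$ so that no $c_{v_i}$ vanishes on an edge, and work on the pruned net $G'$ (finite, since $G$ has finitely many leaves, hence finitely many vertices by Lemma \ref{tfinver-l}). For a single direction $v$, the linear function $\vec{x}\cdot v$ increases strictly along every positively oriented edge, so $(G',\jo{O}_v)$ is acyclic; this lets me define the combinatorial potential $h_v(x)$ to be the length of the longest $\jo{O}_v$-oriented path ending at $x$. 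By construction $0 \le h_v(x) \le D_v$ at every vertex, and $h_v(y) - h_v(x) \ge 1$ along every positively oriented edge $(x,y)$.

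First I would establish the one-direction estimate $\sum_{e\in E_{int}} c_v(e)^2 \le D_v\, c_{in}(v)$. The left-hand side is controlled by the potential: since $|c_v(e)| \le 1$ and $h_v(y) - h_v(x) \ge 1$ on a positively oriented interior edge,
\[
\sum_{e\in E_{int}} c_v(e)^2 \;\le\; \sum_{e\in E_{int}} |c_v(e)| \;\le\; \langle \nabla h_v \mid c_v \rangle_{E_{int}}.
\]
Then I would reuse the summation-by-parts identity from Lemma \ref{tisop-l}. Because $c_v$ satisfies Kirchhoff's current law at every interior vertex (Lemma \ref{tcondver-l}), the divergence of $c_v$ restricted to $E_{int}$ is supported on the leaf-attachment vertices $x_a$, where it equals $-c_v(x_a,a)$; hence $\langle \nabla h_v \mid c_v \rangle_{E_{int}} = \sum_{a \in \del X} h_v(x_a)\, c_v(x_a,a)$. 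Splitting this over $E^+_\del$ and $E^-_\del$: on $E^+_\del$ I bound $h_v(x_a) \le D_v$ and invoke Lemma \ref{tcin-l} to get $\le D_v\, c_{in}(v)$, while on $E^-_\del$ every term is $\le 0$ since $h_v \ge 0$ and $c_v(x_a,a) < 0$. This yields the desired one-direction estimate.

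To finish, I would sum over the basis. Since $\sum_i c_{v_i}(e)^2 = \|\wh{e}\|^2 = 1$ for every edge, $|E_{int}| = \sum_i \sum_{e\in E_{int}} c_{v_i}(e)^2 \le \sum_i D_{v_i}\, c_{in}(v_i)$. Cauchy-Schwarz turns the right-hand side into $\big(\sum_i D_{v_i}^2\big)^{1/2}\big(\sum_i c_{in}(v_i)^2\big)^{1/2}$, and the second factor is at most $\tfrac12 |\del X|$ by the exact computation already used in Lemma \ref{tisop-l} (namely $c_{in}(v)^2 \le \tfrac14 |\del X| \sum_{x\in\del X} c_v(x)^2$, followed by $\sum_i c_{v_i}(x)^2 = 1$ for each leaf vector). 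Thus $|E_{int}| \le \tfrac12 |\del X|\big(\sum_i D_{v_i}^2\big)^{1/2}$. Counting incidences, $\nu(X_{int}) = 2|E_{int}| + |\del X|$ (each interior edge is counted twice, each leaf edge once), which gives $\nu(X_{int}) \le |\del X| + |\del X|\big(\sum_i D_{v_i}^2\big)^{1/2}$, a fortiori the stated bound.

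I expect the main obstacle to be the careful construction and bookkeeping of the combinatorial potential $h_v$: one must check acyclicity of $\jo{O}_v$ to make longest-path lengths well defined, verify $0 \le h_v \le D_v$ together with the unit increment, and confirm that the summation-by-parts identity of Lemma \ref{tisop-l} transfers verbatim once the $E_{int}$-divergence of $c_v$ is identified with the leaf currents. The inequality $c_v(e)^2 \le |c_v(e)|$, valid precisely because each $|c_v(e)| \le 1$, is the small but essential observation that lets the combinatorial potential play the role formerly held by the edge lengths.
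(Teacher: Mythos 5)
Your proof is correct and takes essentially the same route as the paper's: the same longest-oriented-path potential $h_v$, the same chain $\sum_e c_v(e)^2 \le \sum_e |c_v(e)| \le \langle \nabla h_v \mid c_v\rangle_{E_{int}} \le D_v\, c_{in}(v)$, summation over an orthonormal basis, and the Cauchy--Schwarz bound on $\sum_i D_{v_i} c_{in}(v_i)$ recycled from Lemma \ref{tisop-l}. The only divergence is your incidence count $\nu(X_{int}) = 2|E_{int}| + |\del X|$, which is in fact the correct one and yields the marginally sharper additive term $|\del X|$, whereas the paper uses $|E_{int}| = \tfrac{1}{2}\nu(X_{int}) - |\del X|$ and states the bound with $2|\del X|$; either way the claimed inequality follows.
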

\begin{proof}
Let $h$ be a function so that $\text{sgn} \nabla h(x,y) = \text{sgn} c_v(x,y)$ defined as follows. For a vertex $x$, $h(x)$ is the length of the \emph{longest} oriented path leading to $x$. Note that, if $c_v(x,y) >0$ then  $\nabla h(x,y) \in \zz_{>0}$ . As in the proof of Lemma \ref{tisop-l}, one has
\[
\sum_{e \in E_{int} } c_v(e)^2 \leq \sum_{e \in E_{int} } c_v(e) \leq  \langle \nabla h \mid c_v \rangle_{E_{int}} \leq D_v c_{in}(v),
\]
where, in the second sum, one chooses the orientation of the edge so that $c_v(e) \geq 0$.
Repeat and sum for each orthogonal direction to get 
\[
|E_{int}| \leq  \sum_{i} D_{v_i} c_{in}(v_i).
\]
The conclusion follows by noting that $|E_{int}| = \tfrac{1}{2}\nu(X_{int})- |\del X|$ and by bounding the right-hand term as in Lemma \ref{tisop-l}.
\end{proof}
Let us show an example where this inequality is sharp up to a constant.
\begin{exa}
Take the packing of the hypercube in $\rr^d$ of side $n \in \nn$ by unit hypercubes. Then $\nu(X_{int}) = 2d n^d$, $D_{v_i} = dn$ and $|\del X| = 2dn^{d-1}$. Thus the inequality reads 
% $2dn^d \leq 4dn^{d-1} + 2d^2 n^d\sqrt{d}$ or 
$1 \leq \tfrac{2}{n} + d\sqrt{d}$. \hfill $\Diamond$
\end{exa}
On the other hand, There is an easy example to show that $D_v$ can be of the order of $|\del X|^2$. This shows that the previous lemma cannot give the bound stated in Question \ref{laques}.(i).
\begin{exa}\label{exadiam}
Take square side $n \in \nn$ packed by unit squares. Since $v$ cannot be parallel to the side, a longest path is a diagonal (of length $2n$). Now add $k$ lines to this graph, so that these line intersect every edge of this diagonal path. Then $D_v$ is $2(k+1)n$ while $|\del X| = 4n+2k$. Letting $k=n$ gives $D_v = \tfrac{1}{12} (|\del X|+6) |\del X|$. \hfill $\Diamond$
\end{exa}

Lemmas 
% \ref{tconvhul-l}, 
% \ref{tborndeg-l}, 
\ref{tisop-l}, \ref{tfinver-l} and \ref{tiscomb-l} complete the proof of Theorem \ref{lethm}.

\end{document}